\theoremstyle{plain}
\newtheorem{thm}{Theorem}[section]
\newtheorem{lem}[thm]{Lemma}
\newtheorem*{cor}{Corollary}
\theoremstyle{definition}
\newtheorem{defn}[thm]{Definition}
\newtheorem{alg}[thm]{Algorithm}
\newcommand{\p}{\mathcal{P}}
\newcommand{\B}{\mathcal{B}}
\newcommand{\W}{\text{width}}
\newcommand{\ord}{\text{order}}
\begin{document}

\nocite{*}

\title{Treewidth Bounds for Planar Graphs Using Three-Sided Brambles}
\author{Karen L. Collins\\
\small Dept. of Mathematics \& Computer Sci.\\
\small Wesleyan University\\
\small Middletown CT 06459\\
\small\tt kcollins@wesleyan.edu\
\and
Brett C. Smith \\
\small Dept. of Mathematics\\
\small Yale University\\
\small New Haven CT 06520 \\
\small\tt brett.c.smith@yale.edu
}

\maketitle

\begin{abstract}
Square grids play a pivotal role in Robertson and Seymour's work on graph minors as planar obstructions to small treewidth. 
We introduce a three-sided bramble in a plane graph called a net, which generalizes the standard bramble of crosses in a square grid. We then characterize any minimal cover of a net as a tree drawn in the plane. 

We use nets in an $O(n^3)$ time algorithm that computes both upper and lower bounds on the bramble number (hence treewidth) of any planar graph. Let $G$ be a planar graph, $BN(G)$ be its bramble number and $\lambda(G)$ be the largest order of any net in a subgraph of $G$. Our algorithm outputs a constant, $KB$, so that $\lambda(G)/4 \leq KB \leq BN(G)\leq 4KB \leq 4\lambda(G)$. 

Let $s(G)$ be the size of a side of the largest square grid minor of $G$. Smith (2015) has shown that $\lambda(G) \geq s(G)$. Our upper bound improves that of Grigoriev (2011) when $\lambda(G)\leq (5/4)s(G)$. We correct a lower bound of Bodlaender, Grigoriev and Koster (2008) to $s(G)/5$ (instead of $s(G)/4$) and thus the lower bound of $\lambda(G)/4$ on our approximation is an improvement.  

\vspace{12pt}

\noindent \textbf{Keywords} \ \ \ Treewidth $\cdot$ Bramble $\cdot$ Tree decomposition $\cdot$ Planar graph $\cdot$ Grid minor $\cdot$ Net 
\end{abstract}

\section{Introduction}

The treewidth of a graph is a fundamental idea in Robertson and Seymour's pioneering work on graph minors. For any graph $G$, let $TW(G)$ be the treewidth of $G$ (see \cite{Robertson1986a}). The base case in the proof of the Graph Minor Theorem \cite{Robertson2004} relies on square grids for two reasons: (1) the $n \times n$ square grid has treewidth $n$, so the family of square grids has unbounded treewidth, and (2) each square grid is planar and hence has genus zero. Robertson and Seymour use their well-known Grid-Minor Theorem (also called the Excluded Grid Theorem) to start an induction on the genus of a graph. 


We replace squares with triangles in this role. In a graph, every 4-cycle contains a 3-cycle as a minor. In this sense, searching for a triangle is more general than searching for a square. 

We proceed by addressing the dual problem to treewidth, finding a graph's bramble number. Let $BN(G)$ be the bramble number of a graph $G$. Seymour and Thomas first introduced brambles (originally called screens) in \cite{Seymour1993} to get lower bounds on the treewidth of a graph. 

\begin{thm}[Seymour and Thomas] \label{duality}
For any graph $G$,  $$TW(G)=BN(G)-1.$$
\end{thm}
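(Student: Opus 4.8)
The plan is to prove the equivalent statement $BN(G)=TW(G)+1$ by establishing two inequalities. Recall that a bramble $\B$ is a family of pairwise touching connected vertex sets, its order is the least size of a set meeting every member, and $BN(G)$ is the maximum order over all brambles; while $TW(G)+1$ is the minimum over all tree decompositions $(T,\{V_t\})$ of the largest bag size $\max_t|V_t|$. I would dispatch the ``soft'' inequality $BN(G)\le TW(G)+1$ first and then confront the ``hard'' inequality $TW(G)\le BN(G)-1$.

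For $BN(G)\le TW(G)+1$, fix any bramble $\B$ and any tree decomposition $(T,\{V_t\})$ of width $TW(G)$, and show that some single bag $V_t$ meets every member of $\B$; this gives $\ord(\B)\le|V_t|\le TW(G)+1$, and maximizing over $\B$ yields the bound. The mechanism is an orientation of $T$: deleting an edge $e=st$ splits $T$ and hence partitions $V(G)$ across the separator $S_e=V_s\cap V_t$. Because bramble members are connected and pairwise touching, any member avoiding $S_e$ must lie wholly on one side of $S_e$, and all such members lie on the \emph{same} side; I orient $e$ toward that side (arbitrarily if $S_e$ already covers $\B$). A finite oriented tree has a sink $t^{*}$, and I claim $V_{t^{*}}$ covers $\B$: if some $B\in\B$ missed $V_{t^{*}}$, then the subtree of nodes whose bags meet $B$ would sit strictly on the far side of the edge leaving $t^{*}$ toward it, forcing that edge to point away from $t^{*}$ and contradicting that $t^{*}$ is a sink.

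For $TW(G)\le BN(G)-1$, I would argue the following reformulation: assuming $G$ has no bramble of order exceeding $k$, construct a tree decomposition of width at most $k-1$; applying this with $k=BN(G)$ then gives $TW(G)\le BN(G)-1$. The plan is to build the decomposition recursively, at each stage choosing a separator $S$ with $|S|\le k$ whose removal breaks the current piece into parts none of which can ``support'' a bramble of order $>k$ — the absence of a high-order bramble is exactly the obstruction-free condition guaranteeing such separators exist. Equivalently, this direction can be phrased through the helicopter cops-and-robber game, in which a bramble of order $k+1$ is precisely a winning strategy for an invisible, fast robber evading $k$ cops, while a width-$(k-1)$ decomposition is a winning monotone search plan for $k$ cops; the inequality then follows from the monotonicity theorem for that search game.

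I expect this second inequality to be the main obstacle. The first direction is a clean ``orient-and-find-a-sink'' argument, but manufacturing a tree decomposition from the mere \emph{nonexistence} of a large bramble is a genuine min--max phenomenon: the delicate points are verifying that a suitable small separator exists at every recursive stage and that the resulting separators can be glued into one globally consistent tree without the width exceeding $k-1$ (equivalently, establishing monotonicity of the search game). Since the statement is due to Seymour and Thomas, I would present the easy-direction bound in full and defer the hard-direction construction to \cite{Seymour1993}, relying on the covering lemma above wherever this paper needs concrete bramble-order lower bounds on treewidth.
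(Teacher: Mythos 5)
The paper does not prove this statement at all: it is quoted as Seymour and Thomas's duality theorem, with the reader pointed to Bellenbaum and Diestel for a short proof, and the theorem is then used as a black box to convert bramble orders into treewidth bounds. So your proposal supplies strictly more than the paper does. Your sketch of the easy inequality $BN(G)\le TW(G)+1$ is the standard and correct argument: for each tree edge $e=st$ the separator $V_s\cap V_t$ either covers the bramble (done) or forces every avoiding member, by connectivity and pairwise touching, onto one common side, and the bag at a sink of the resulting orientation must cover $\B$ --- note that the ``arbitrarily oriented'' edges cause no trouble, since the contradiction at the sink only ever invokes an edge whose separator fails to cover. Your treatment of the hard inequality $TW(G)\le BN(G)-1$ is, as you say yourself, only a plan: ``choose a separator whose removal leaves no part supporting a large bramble, then glue'' is exactly where all the work lies (one needs the right inductive statement, e.g.\ extending a partial decomposition over a set $X$ with $|X|\le k$ covered by no large bramble, and a careful choice of separator relative to $X$), and the cops-and-robber reformulation likewise pushes the difficulty into the monotonicity theorem rather than resolving it. Deferring that direction to Seymour and Thomas is legitimate and is consistent with how this paper treats the result; just be aware that for the purposes of this paper only the direction $BN(G)\le TW(G)+1$ is not even needed --- the paper uses brambles to get \emph{lower} bounds on treewidth, which is your easy direction, and gets its upper bounds by explicitly constructing a tree decomposition in Theorem~\ref{upper}.
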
 
\noindent Bellenbaum and Diestel have  a short proof of this result in \cite{Bellenbaum2002}. See also Reed \cite{Reed1997} for more on brambles. Bodlaender~\cite{Bodlaender1998} surveys various equivalent notions to treewidth.

Our method for bounding treewidth will be to define a special class of brambles called nets. Nets were introduced in Smith's thesis \cite{Smith2015}, where they  
were used  to construct two families of planar graphs, each containing a minor minimal obstruction to any treewidth. Nets are three-sided brambles in plane graphs. A formal definition of nets appears in Section~3.


Nets can be thought of as a generalization of a natural bramble in a square grid called the {\it bramble of crosses}, described in \cite{Birmele2007}. Each cross contains vertices from one row and one column in the grid. Smith \cite{Smith2015} defines $n$-triangular grids for $n\geq 3$ (see figure~\ref{triangle}), and considers brambles whose elements meet all three sides of the triangle (rather than four sides, as in the bramble of crosses in the square grid). In particular, he proves that this bramble in the $n$-triangular grid has order $n$. Thus, triangular grids are a natural three-sided analogue to square grids. In this context, a bramble of crosses becomes a bramble of trees, each with a unique root and three branches. This three-sided bramble in a triangular grid provides a canonical example of a net in a plane graph.

It is important to note that the $n$-triangular grid does not contain an $n \times n$ square grid. The largest square grid minor has side-length less than or equal to $(n+1)/\sqrt{2}$ because, by counting vertices,  there are  $\left(^{n+1}_{\;\;\; 2}\right)$ vertices in an $n$-triangular grid and $m^2$ vertices in an $m \times m$ square grid. Thus, the bramble number of a triangular grid is larger than the bramble number of any of its square grid minors. These examples motivate us to improve lower bounds on treewidth for planar graphs found using square grids by finding high order three-sided nets.

We define $\lambda(G)$ to be the largest order of any net in a subgraph of $G$.  Let $s(G)$ be the size of the side of the largest square grid minor in $G$.  Smith~\cite{Smith2015} has shown that $s(G)\leq \lambda(G)$, and in particular when $G$ is an $s\times s$ square grid, then $\lambda(G)=s$. Chekuri and Chuzhoy (2016) show that there is a polynomial relationship between the treewidth and square grid minor size of a graph \cite{Chekuri2016}.  Since $\lambda(G)\geq s(G)$, their results demonstrate a polynomial relationship between the treewidth and the net order of a graph.

Smith \cite{Smith2015} has a polynomial time algorithm to compute the minimum cover of a net, giving a lower bound for $BN(G)$.  In Section~4, we present a faster such algorithm. Given a net of a planar graph, $G$, we construct an $O(n^2)$ time algorithm, Net-Alg, whose output is a minimum cover.
We characterize a cover as a tree drawn in the plane which may go through the faces, edges or vertices of $G$. Net-Alg proceeds by the shortest path algorithm from Henzinger, et al. \cite{Henzinger1997} and inspiration from Dreyfus \cite{Dreyfus1971}
to find a Steiner tree that meets all three sides of the net.



We use nets to replace square grids in the work of Bodlaender, Grigoriev and Koster \cite{Bodlaender2008}. They  construct a rooted-search-tree algorithm to find lower bounds on the treewidth of a graph.  
In particular, their algorithm finds a square grid minor of size $LB_2$. The claim is that $LB_2\geq s(G)/4$; however as we will show in Section~6, the proof shows that $LB_2\geq s(G)/5$ only. 
Grigoriev \cite{Grigoriev2011} writes a new algorithm, using ideas from \cite{Bodlaender2008}, to construct a tree decomposition of a graph and thus gets an upper bound on treewidth. We adapt ideas from both \cite{Bodlaender2008} and \cite{Grigoriev2011} in our rooted-search tree algorithm, BT-Alg. Our algorithm uses nets to achieve both lower and upper bounds on bramble number, as seen in the following theorem.

\vspace{.08in}

\noindent {\bf Theorem 5.7.} 
{\it Let $G$ be a planar graph. Then BT-Alg computes $KB$ in $O(|G|^3)$ time, and }
$$\frac{\lambda(G)}{4} \leq KB \leq BN (G) \leq 4KB \leq 4\lambda(G).$$\vspace{.08in}



\noindent The proof appears in Section~5. Theorem~\ref{lower-and-upper} improves the upper bound of $5s(G)-6$ from \cite{Grigoriev2011} whenever $\lambda\leq 5s(G)/4$. Since $\lambda(G)\geq s(G)$, with the correction in Section~6, our lower bound is better than \cite{Bodlaender2008}. 
 
The outline of the paper is as follows. In Section~2 we do some preliminaries. In Section~3 we define nets and give an alternative characterization of a cover of net. In Section~4 we construct Net-Alg. In Section~5 we construct BT-Alg and prove Theorem~\ref{lower-and-upper}. In Section~6 we correct the lower bound in Bodlaender et al.  We make some concluding remarks in Section~7.

\section{Preliminaries}

In this paper, every graph will be simple; for a graph $G$, we let $V(G)$ denote its vertex set and $E(G)$ denote its edge set. For each vertex, $u \in V(G)$, we let $N_G(u)$ denote its neighborhood, the set of vertices adjacent to $u$ in $G$. For any subset of vertices, $U \subseteq V(G)$, let $G[U]$ denote the subgraph of $G$ induced by $U$. If $G[U]$ is connected, then we say the vertex set, $U$, is \textbf{connected} in $G$. We denote the induced subgraph, $G[V(G) \backslash U]$, by $G - U$. We will refer to the power set of vertices in $G$ as $\mathcal{P}(V(G))$. If $T$ is a tree and $s, t \in V(T)$, then let $sTt$ denote the unique path from $s$ to $t$ in $T$. Similarly, if $W = (u_0, ..., u_n)$ is a walk in a graph, then for any $0 \leq i \leq j \leq n$, let $u_iWu_j$ denote the subwalk from $u_i$ to $u_j$ in $W$. If $W$ is a closed walk, let $(u_jWu_n, u_1Wu_i)$ denote the concatenated walk, $(u_j, u_{j+1}, ..., u_n, u_1, u_2,  ..., u_i)$.

\begin{defn}
\label{tree_decomp}
Let $G$ be a graph, let $T$ be a tree and let $\beta: V(T) \rightarrow \p(V(G))$. The pair $(T, \beta)$ is a \textbf{tree decomposition} of $G$ if and only if
\begin{enumerate}
	\item if $u \in V(G)$, then there is some $t \in V(T)$ so that $u \in \beta(t)$, 
	\item if $uv \in E(T)$, then there is some $t \in V(T)$ such that $u,v \in \beta(t)$, AND
	\item if $t_1, t_2, t_3 \in V(T)$ and $t_2\in V(t_1Tt_3)$, then $\beta(t_1) \cap \beta(t_3) \subseteq \beta(t_2)$.
\end{enumerate}
\end{defn}

 The \textbf{width} of a tree decomposition is defined as $\W(T, \beta):=\max\{|\beta(t)|: t \in V(T)\} - 1$ and the \textbf{treewidth} of a graph $G$ is 
$$TW(G):= \min\{\W(T, \beta): (T, \beta) \ \text{is a tree decomposition of} \ G\}.$$

Using ideas from Reed in \cite{Reed1997}, we present an equivalent definition for a tree decomposition by looking at the inverse mapping:
\begin{align*}
	\beta^{-1}: V(G) &\rightarrow \mathcal{P}(V(T)) \\
			u &\mapsto \{t \in V(T) : u \in \beta(t)\}.
\end{align*}
\begin{lem}\label{alttw}
Let $G$ be a graph, $T$ a tree and $\beta: V(T) \rightarrow \mathcal{P}(V(G))$. Then $(T, \beta)$ is a tree decomposition of $G$ just in case both
\begin{enumerate}
	\item if $u \in V(G)$, then $\beta^{-1}(u)$ is nonempty and connected in $T$, AND
	\item if $uv \in E(G)$, then $\beta^{-1}(u) \cap \beta^{-1}(v) \neq \varnothing$.
\end{enumerate}
\end{lem}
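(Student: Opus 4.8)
The plan is to prove the biconditional by matching the three defining conditions in Definition~\ref{tree_decomp} against the two conditions of Lemma~\ref{alttw}, using throughout the elementary duality $u \in \beta(t) \iff t \in \beta^{-1}(u)$. First I would dispatch the two easy correspondences. Condition (1) of Definition~\ref{tree_decomp} asserts that every $u \in V(G)$ lies in some bag $\beta(t)$, which by the duality says precisely that $\beta^{-1}(u) \neq \varnothing$; this is the nonemptiness half of condition (1) of the lemma. Condition (2) of Definition~\ref{tree_decomp} asserts that for each edge $uv$ there is a $t$ with $u, v \in \beta(t)$, i.e.\ a $t$ lying in both $\beta^{-1}(u)$ and $\beta^{-1}(v)$, which is exactly condition (2) of the lemma, $\beta^{-1}(u) \cap \beta^{-1}(v) \neq \varnothing$. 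Each of these is an immediate rewriting, so no real work is needed.

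The substance is showing that condition (3) of Definition~\ref{tree_decomp} corresponds, once nonemptiness is assumed, to connectivity of each $\beta^{-1}(u)$ in $T$. I would first record the standard tree fact: for a tree $T$, a nonempty set $S \subseteq V(T)$ induces a connected subgraph $T[S]$ if and only if $S$ is \emph{path-closed}, meaning that for all $s, s' \in S$ the unique path $sTs'$ lies entirely in $S$. The forward direction holds because any path inside $T[S]$ joining $s$ and $s'$ must coincide with the unique tree-path $sTs'$; the reverse holds because path-closure exhibits, for any two vertices of $S$, a connecting path whose every edge has both endpoints in $S$ and hence lies in $T[S]$.

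Next I would translate condition (3) into path-closure. Rewriting $\beta(t_1) \cap \beta(t_3) \subseteq \beta(t_2)$ via the duality, it says: for every $u$ and every $t_1, t_3 \in \beta^{-1}(u)$, each $t_2 \in V(t_1Tt_3)$ also lies in $\beta^{-1}(u)$. Quantifying over all $u$, this is exactly the assertion that every $\beta^{-1}(u)$ is path-closed. Combining the three translations: conditions (1) and (3) of Definition~\ref{tree_decomp} together are equivalent to the statement that every $\beta^{-1}(u)$ is nonempty and path-closed, which by the tree fact means every $\beta^{-1}(u)$ is nonempty and connected, i.e.\ condition (1) of the lemma; and condition (2) of Definition~\ref{tree_decomp} is equivalent to condition (2) of the lemma. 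This yields the biconditional.

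I expect no serious obstacle here, as the result is a routine reformulation. The one point demanding care is the precise quantifier matching in condition (3): it ranges over all triples with $t_2 \in V(t_1Tt_3)$, and I must be sure this corresponds to path-closure of each individual set $\beta^{-1}(u)$ rather than to some joint condition across different vertices---this is why the translation is carried out one vertex $u$ at a time. I would also flag the bookkeeping point that condition (3) alone yields only path-closure (under which the empty set is vacuously admissible), so the nonemptiness clause in the lemma's condition (1) genuinely depends on condition (1) of Definition~\ref{tree_decomp}, not on its condition (3).
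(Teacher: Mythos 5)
Your proposal is correct, and it fills in exactly the routine verification the paper leaves implicit (the paper simply states that the lemma ``follows directly from the definition of a tree decomposition''). The translation of condition (3) into path-closure of each $\beta^{-1}(u)$, together with the observation that a nonempty subset of a tree is path-closed if and only if it induces a connected subtree, is the intended argument, and your care about quantifying one vertex $u$ at a time and about where nonemptiness comes from is exactly right.
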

The proof of lemma~\ref{alttw} follows directly from the definition of a tree decomposition. 
Now we will give the precise definition of a bramble. 
\begin{defn}
Let $G$ be a graph. For two subsets of vertices, $A, B \subseteq V(G)$, we say that $A$ and $B$ \textbf{touch} if either
\begin{enumerate}
	\item $A\cap B \neq \varnothing$, OR
	\item there is some $uv \in E(G)$ such that $u \in A$ and $v \in B$.
\end{enumerate}

A finite collection, $\B = \{B_i \subseteq V(G)\}_{i \in [n]}$, of connected, mutually touching vertex sets is called a \textbf{bramble} in $G$. A set, $C \subseteq V(G)$ \textbf{covers} $\B$ if $C$ nontrivially intersects each vertex set in the bramble. The \textbf{order} of a bramble $\B$ is defined as $\ord(\B):= \min\{|C| : C \text{ covers } \B\}$, and the \textbf{bramble number} of a graph, $G$, is 
	$$BN(G):= \max\{\ord(\B): \B \text{ is a bramble in } G\}.$$
\end{defn}

For a simple example of a bramble in a graph, we can take the collection of vertices (as singleton sets) from a clique. The order of this bramble is the number of vertices in the clique. In particular, the bramble number of any graph is bounded below by its clique number. However, this example does not take advantage of the ability to have intersection among sets. Allowing sets in a bramble to intersect gives a much more robust class of obstructions to low treewidth.

When talking about graph embeddings, we follow the conventions of West \cite{West1996}. In particular, we refer to a graph embedded in the plane as a \textbf{plane graph}, and for a plane graph, $G$, a \textbf{face}, $F$, of $G$ is a maximal region in the plane such that for any two points in $F$, there is a curve avoiding $G$ connecting those points. For computational reasons, we point out that in any plane graph, if $v$ is the number of vertices, $e$ is the number of edges and $f$ is the number of faces, then $e \leq 3v - 6$ and $f \leq 2v-4$. Therefore, any computation that iterates on vertices, edges and faces of a graph still runs in $O(v)$-time. With this in mind, for any planar graph we let $|G| = |V(G)|$.

\section{Introducing nets and characterizing net covers}

We are ready to define a natural family of three-sided brambles that occur in plane graphs, generalizing the bramble of crosses in a square grid graph described in the introduction. 

\begin{defn} Let $G$ be a connected plane graph and let $W = (u_0, u_1,  ...,  u_n)$ be a closed walk peripheral to the unbounded face of $G$, so $u_0=u_n$.  For any choice of $j$ and $k$ so that $0 \leq j \leq k \leq n$, we call the triple, $\mathcal{F} = (W, j, k)$, a \textbf{3-frame} of $G$.
\end{defn}

A 3-frame decomposes $W$ into three subwalks with overlapping endpoints. 

\begin{defn} Given a 3-frame, $\mathcal{F} = (W, j, k)$, the three subwalks, $u_0Wu_j$, $u_jWu_k$ and $u_kWu_n$, are called the \textbf{sides} of $\mathcal{F}$. 
\end{defn}

Throughout the rest of the paper we refer to the vertex sets of the sides of the 3-frame by colors:
$$blue = \{u_0, ..., u_j\}; \ \ \ red = \{u_j, ..., u_k\}; \ \ \ yellow = \{u_k, ..., u_n\}$$

\begin{defn} Given a 3-frame, $\mathcal{F}$, we call $X\subseteq V(G)$ an \textbf{$\mathcal{F}$-vine} if $X$ is connected and contains at least one vertex in each side of $\mathcal{F}$. 
\end{defn}

That is, an $\mathcal{F}$-vine has at least one $blue$, one $red$ and one $yellow$ vertex.

\begin{defn} Given a 3-frame, $\mathcal{F}$, we define the \textbf{$\mathcal{F}$-net} of $G$ as  the collection of all $\mathcal{F}$-vines. We denote the $\mathcal{F}$-net of $G$ by $\mathbf{N}(G,\mathcal{F})$.
\end{defn}

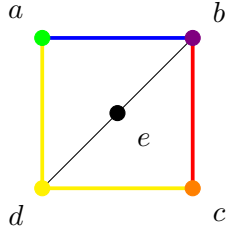
\begin{figure}[h]
\begin{center}
\begin{tikzpicture}
	\draw [line width=0.5mm, blue] (0,2) -- (2,2);
	\draw [line width=0.5mm, red] (2,2) -- (2,0);
	\draw [line width=0.5mm, yellow] (2,0) -- (0,0) -- (0,2);
	\draw (0,0) -- (2,2);
	\draw [fill, yellow] (0,0) circle [radius=0.1];
	\draw [fill, green] (0,2) circle [radius=0.1];
	\draw [fill, orange] (2,0) circle [radius=0.1];
	\draw [fill, violet] (2,2) circle [radius=0.1];
	\draw [fill] (1,1) circle [radius=0.1];
	\node at (-.35,-.35) {$d$};
	\node at (2.35,-.35) {$c$};
	\node at (2.35,2.35) {$b$};
	\node at (-.35,2.35) {$a$};
	\node at (1.35,0.65) {$e$};
\end{tikzpicture}
\caption{A net with sides $\{a,b\}$, $\{b,c\}$ and $\{c,d,a\}$}\label{simplenet}
\end{center}
\end{figure}

The graph in Figure~\ref{simplenet} is given a frame with sides $\{a,b\}$, $\{b,c\}$ and $\{c,d,a\}$. This frame defines a net whose elements are connected subsets of vertices, each of which intersects all three sides. The minimal elements of this net are $\{a, b\}$, $\{b, c\}$, $\{a,c,d\}$ and $\{b, d, e\}$. Figure~\ref{triangle} shows a 3-frame of a 6-triangular grid. 

Note that the vertex sets $blue$, $red$ and $yellow$ are not disjoint, sharing at least one vertex between pairs. In the case where $W$ is a cycle, each pair of colors intersect on exactly one vertex, but both $W$ and our choice of $j$ and $k$ may cause more intersection between the sides.

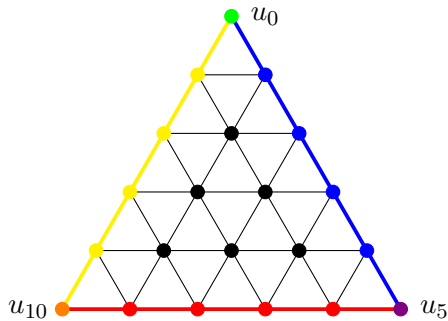
\begin{figure}[h]
\begin{center}
\begin{tikzpicture}[scale=.9]
	\node at (3, 4.33) {$u_0$};
	\node at (-0.5, 0) {$u_{10}$};
	\node at (5.5, 0) {$u_5$};
	\draw [line width=0.5mm, red ] (0, 0) -- (5, 0) ;
	\draw [line width=0.5mm, blue ] (5, 0) -- (2.5, 4.33);
	\draw [line width=0.5mm, yellow ](0, 0) -- (2.5, 4.33);
	\draw (2, 3.464) -- (3, 3.464);
	\draw (1, 0) -- (3, 3.464);
	\draw (4, 0) -- (2, 3.464);
	\draw (1.5, 2.598) -- (3.5, 2.598);
	\draw (2, 0) -- (3.5, 2.598);
	\draw (3, 0) -- (1.5, 2.598);
	\draw (1, 1.732) -- (4, 1.732);
	\draw (3, 0) -- (4, 1.732);
	\draw (2, 0) -- (1, 1.732);
	\draw (0.5, 0.866) -- (4.5, 0.866);
	\draw (4, 0) -- (4.5, 0.866);
	\draw (1, 0) -- (0.5, 0.866);
	\draw [fill, orange] (0,0) circle [radius=0.1];
	\draw [fill, red] (1,0) circle [radius=0.1];
	\draw [fill, red] (2,0) circle [radius=0.1];
	\draw [fill, red] (3,0) circle [radius=0.1];
	\draw [fill, red] (4,0) circle [radius=0.1];
	\draw [fill, violet] (5,0) circle [radius=0.1];
	\draw [fill, yellow] (0.5, 0.866) circle [radius=0.1];
	\draw [fill] (1.5, 0.866) circle [radius=0.1];
	\draw [fill] (2.5, 0.866) circle [radius=0.1];
	\draw [fill] (3.5, 0.866) circle [radius=0.1];
	\draw [fill, blue] (4.5, 0.866) circle [radius=0.1];
	\draw [fill, yellow] (1, 1.732) circle [radius=0.1];
	\draw [fill] (2, 1.732) circle [radius=0.1];
	\draw [fill] (3, 1.732) circle [radius=0.1];
	\draw [fill, blue] (4, 1.732) circle [radius=0.1];
	\draw [fill, yellow] (1.5, 2.598) circle [radius=0.1];
	\draw [fill] (2.5, 2.598) circle [radius=0.1];
	\draw [fill, blue] (3.5, 2.598) circle [radius=0.1];
	\draw [fill, yellow] (2, 3.464) circle [radius=0.1];
	\draw [fill, blue] (3, 3.464) circle [radius=0.1];
	\draw [fill, green] (2.5, 4.33) circle [radius=0.1];
\end{tikzpicture}
\caption{A 3-frame, $(W, 5, 10)$, in a 6-triangular grid.}\label{triangle}
\end{center}
\end{figure}

\begin{lem} \label{net_bramble}
	Let $G$ be a connected plane graph with a 3-frame $\mathcal{F}$. If $X_1, X_2 \in \mathbf{N}(G,\mathcal{F})$, then $X_1 \cap X_2 \neq \varnothing$.
\end{lem}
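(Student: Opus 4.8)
\section*{Proof proposal for Lemma~\ref{net_bramble}}

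The plan is to run a Jordan-curve/separation argument inside the closed region that $G$ occupies. Work in the closed set $D$ obtained as the complement in the plane of the open unbounded face; since $G$ is connected and plane, the frontier $\partial D$ is a closed curve traced by the peripheral walk $W$, and on it the blue, red and yellow vertex sets appear in that cyclic order, with the corners $u_0$, $u_j$, $u_k$ as the transition points. First I would exploit connectivity: given $X_1\in\mathbf{N}(G,\mathcal{F})$, choose $b_1\in X_1\cap\text{blue}$, $r_1\in X_1\cap\text{red}$, $y_1\in X_1\cap\text{yellow}$ and let $T_1$ be the minimal subtree of a spanning tree of $G[X_1]$ joining $b_1,r_1,y_1$. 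Then $T_1$ is a tripod (or a degenerate path), it is connected, its vertices all lie in $X_1$, it is drawn inside $D$, and it meets $\partial D$ at $b_1,r_1,y_1$, one point in each colour-arc.

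The heart of the argument is the topological claim that such a $T_1$ cuts $D$ into regions so that the portion of $\partial D$ on the frontier of any single region meets at most two of the three colour-classes. Concretely, the three points $b_1,r_1,y_1$ split the boundary circle into three arcs: the arc $b_1\frown r_1$ through the corner $u_j$ (spanning blue and red), the arc $r_1\frown y_1$ through $u_k$ (spanning red and yellow), and the arc $y_1\frown b_1$ through $u_0$ (spanning yellow and blue). I would show that $T_1$, being a contractible set meeting $\partial D$ exactly at these points, induces one region per arc, so that each region's boundary arc omits one colour entirely.

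To finish, suppose for contradiction that $X_1\cap X_2=\varnothing$. Because $G$ is a plane graph, two vertex-disjoint connected subgraphs have disjoint, non-crossing drawings; hence the drawing of $G[X_2]$ avoids $T_1$ and, being connected, lies in the closure of a single region $R^{*}$. Then every boundary vertex of $X_2$—in particular its blue, red and yellow vertices—lies on $\overline{R^{*}}\cap\partial D$, an arc spanning at most two colours (its endpoints are among $b_1,r_1,y_1\in X_1$, hence are not vertices of $X_2$). This contradicts $X_2$ being an $\mathcal{F}$-vine, which must contain a vertex of each colour, and so $X_1\cap X_2\neq\varnothing$.

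The main obstacle is making this separation rigorous when $W$ is not a simple cycle and when $T_1$ degenerates. Cut vertices cause $\partial D$ to revisit some vertices, so I would phrase everything on the boundary circle of the outer face (where each occurrence of a vertex is a distinct point, keeping the cyclic order and the three arcs well defined). I would also check that any extra contacts of $T_1$ with $\partial D$ only subdivide the three arcs further—never merging two colour-classes into one region—and verify that the degenerate path case $b_1\!-\!r_1\!-\!y_1$, as well as coincidences of the colour-vertices at the corners $u_j,u_k,u_0$, still yields regions whose boundary arcs each miss a colour. These case checks are routine but are where the care is needed.
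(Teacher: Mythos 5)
Your argument is correct in outline, but it takes a genuinely different route from the paper. The paper's proof is a forbidden-minor argument: assuming $X_1\cap X_2=\varnothing$, it adds three new vertices $x_b,x_r,x_y$ in the unbounded face, each joined to every vertex of the corresponding side, plus a fourth vertex $x_0$ joined to those three, all while preserving the embedding; the six sets $\{x_b\},\{x_r\},\{x_y\},\{x_0\},X_1,X_2$ then yield a $K_{3,3}$ minor in a plane graph, contradicting Wagner's theorem. That route buys brevity and, more importantly, complete immunity to the degeneracies you worry about: it never matters whether $W$ is a simple cycle, whether a vertex recurs on the peripheral walk with two colours, or whether your tripod degenerates, because only adjacency of $X_1$ and $X_2$ to each colour class is used. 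Your direct Jordan-curve separation argument is more self-contained (no appeal to Kuratowski/Wagner) and makes the geometric mechanism explicit, but it shifts the burden onto exactly the case analysis you flag at the end. The one point I would press you on is the claim that each region of $D\setminus T_1$ meets the boundary in a set of vertices missing a colour: when a cut vertex $v\notin X_1$ occurs on $W$ at positions of two different colours, the plane point $v$ lies in a single region while its occurrences lie on different arcs of the boundary circle, so ``work on the boundary circle'' does not by itself settle which colours a region sees; you need to argue (using connectivity of $T_1$) that the preimage of each region on the boundary circle is still a single arc between consecutive contact points of $T_1$, hence contained in one of the three colour-omitting arcs. This can be done, but it is the real content of your proof rather than a routine check, and it is precisely the work the paper's $K_{3,3}$ construction avoids.
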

\begin{proof} For the sake of contradiction, suppose there are $X_1, X_2 \in \mathbf{N}(G,\mathcal{F})$ such that $X_1 \cap X_2 = \varnothing$. Define a plane graph obtained from $G$ as follows. First, add three new vertices, $x_{b}$, $x_{r}$ and $x_{y}$, to the unbounded face of $G$ so that $x_b$ is adjacent to each vertex in $blue$, $x_r$ is adjacent to each vertex in $red$ and $x_y$ is adjacent to each vertex in $yellow$. Moreover, draw these edges so that $x_b$, $x_r$ and $x_y$ remain peripheral to the unbounded face of the graph. Then add another vertex $x_0$ to the unbounded face and draw edges from $x_0$ to $x_b$, $x_r$ and $x_y$ in such a way that preserves our proper embedding.  The vertex sets $\{x_b\}$, $\{x_r\}$ $\{x_y\}$, $\{x_0\}$, $X_1$ and $X_2$ induce a minor isomorphic to $K_{3,3}$. We have a proper planar embedding of this graph, contradicting Wagner's theorem. Therefore, $X_1 \cap X_2 \neq \varnothing$.
\end{proof}

The proof of the lemma demonstrates how the definition of a net takes advantage of the topology of the plane to guarantee intersection between any two vines. We will continue to use properties of our embedding to understand the order of these brambles.

In order to understand what a minimum size cover of a net looks like, we consider what would topologically prevent a vine touching all three sides of the 3-frame. As we saw in the proof of lemma~\ref{net_bramble}, any two vines have non-trivial intersection. That is, each vine in a net is itself a cover of the bramble.

On the other hand, a sparse plane graph may not take full advantage of the space to find vines with few vertices. See Figure~\ref{fig3}. For example, consider a cycle on 15 vertices, $C_{15} = (c_0, c_1, ..., c_{15})$.  We could evenly divide the cycle into a 3-frame: $(C_{15}, 5, 10)$, and any $(C_{15}, 5, 10)$-vine would use at least five vertices. However, a set of two vertices, $\{c_{5}, c_{10}\}$, covers the $(C_{15}, 5, 10)$-net. We can use the embedding to understand why this set covers the bramble by adding an edge between $c_{5}$ and $c_{10}$. This additional edge would make $\{c_{5}, c_{10}\}$ a $(C_{15}, 5, 10)$-vine, and any embedding of $C_{15}$ would afford us the space to make such an edge. 

To account for these latent vines, we need to pay attention to what connections are possible through faces of the embedding. We now define a new plane graph obtained from a plane graph that inherits connectivity from the bounded faces in our embedding.

\begin{figure}[h]
\begin{center}
\begin{tikzpicture}[scale=.8]
	\node at (3, 4.33) {$c_0$};
	\node at (-0.6, 0) {$c_{10}$};
	\node at (5.6, 0) {$c_5$};
	\draw [line width=0.5mm, red ] (0, 0) -- (5, 0) ;
	\draw [line width=0.5mm, blue ] (5, 0) -- (2.5, 4.33);
	\draw [line width=0.5mm, yellow ](0, 0) -- (2.5, 4.33);
	\draw [dashed] (0,0) .. controls (2.5, 1) .. (5, 0);
	\draw [fill, orange] (0,0) circle [radius=0.2];
	\draw [fill, red] (1,0) circle [radius=0.1];
	\draw [fill, red] (2,0) circle [radius=0.1];
	\draw [fill, red] (3,0) circle [radius=0.1];
	\draw [fill, red] (4,0) circle [radius=0.1];
	\draw [fill, violet] (5,0) circle [radius=0.2];
	\draw [fill, yellow] (0.5, 0.866) circle [radius=0.1];
	\draw [fill, blue] (4.5, 0.866) circle [radius=0.1];
	\draw [fill, yellow] (1, 1.732) circle [radius=0.1];
	\draw [fill, blue] (4, 1.732) circle [radius=0.1];
	\draw [fill, yellow] (1.5, 2.598) circle [radius=0.1];
	\draw [fill, blue] (3.5, 2.598) circle [radius=0.1];
	\draw [fill, yellow] (2, 3.464) circle [radius=0.1];
	\draw [fill, blue] (3, 3.464) circle [radius=0.1];
	\draw [fill, green] (2.5, 4.33) circle [radius=0.1];
\end{tikzpicture}
\caption{$\{c_5, c_{10}\}$ covers the net framed by $(C_{15}, 5, 10)$.} \label{fig3}.
\end{center}
\end{figure}
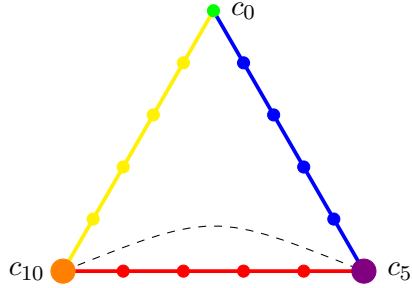 

\begin{defn}\label{face_alg} \ 
Let $G$ be a connected plane graph. For each bounded face, $f \in F(G)$: 
\begin{enumerate}
	\item Create a vertex $v_f$ inside the face, $f$. 
	\item Add an edge between $v_f$ and each vertex peripheral to $f$.
\end{enumerate}
We call the resulting plane graph the \textbf{face graph} of $G$, denoted $\widehat{G}$.
\end{defn}

Since each edge of $G$ is contained in at most two faces of $G$, a greedy search on edges would find all faces in $O(|G|)$ time. Notice that $\mathcal{F}$ is also a 3-frame of $\widehat{G}$, so $\mathbf{N}(G,\mathcal{F}) \subseteq \mathbf{N}({\widehat{G}}, \mathcal{F})$. The face graph is something like a combination of a plane graph with its dual and will give us a useful property concerning the connectedness of the graph. 

The following definition is inspired by a similar definition used by Robertson and Seymour in \cite{Robertson1986}.

\begin{defn} Let $G$ be a connected plane graph and let $W = (u_0, u_1, ..., u_n)$ be a closed walk peripheral to $G$. For any quadruple, $0 \leq a \leq b \leq c \leq d \leq n$, we say $(u_a,u_c)$ and $(u_b,u_d)$ \textbf{cross} in $W$. 
\end{defn}

The Jordan Curve Theorem implies for any two paths in a connected plane graph, if the endpoints of one cross the endpoints of another in the peripheral walk, then the two paths share a common vertex. We use this fact in the following useful characterization of separating sets in face graphs. 

\begin{lem} \label{face} Let $G$ be a connected plane graph, let $W$ be a closed walk peripheral to the unbounded face of $G$ and let $X \subseteq V(G)$. For any two vertices $u, v \in W\backslash X$, there is a $(u, v)$-path in $\widehat{G} - X$ if and only if there is no path in $G[X]$ whose endpoints cross $(u, v)$ in $W$. 
\end{lem}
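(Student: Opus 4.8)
The plan is to prove the two implications separately, working throughout inside the face graph $\widehat{G}$, which by Definition~\ref{face_alg} is a connected plane graph with the same peripheral walk $W$ as $G$, since every added vertex and edge lies in a bounded face. The single most useful bookkeeping fact is that $X \subseteq V(G)$ contains no face vertex $v_f$, and every edge of $\widehat{G}$ not already in $G$ is incident to such a face vertex; hence $\widehat{G}[X] = G[X]$, and ``a $(u,v)$-path in $G[X]$'' may be read interchangeably as ``a path in $\widehat{G}[X]$.'' I also record that the endpoints of any path in $G[X]$ lie in $X$, while $u, v \notin X$.

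For the implication that a crossing path forbids a connecting path (this gives the ``only if'' direction in contrapositive form), I would argue as follows. Suppose there is a path $P$ in $G[X]$ whose endpoints cross $(u,v)$ in $W$, and suppose for contradiction that $Q$ is a $(u,v)$-path in $\widehat{G} - X$. Both $P$ and $Q$ are paths in the connected plane graph $\widehat{G}$ with peripheral walk $W$, and by hypothesis the endpoints of $P$ cross the endpoints of $Q$ in $W$; the Jordan-curve observation stated just above the lemma then forces $P$ and $Q$ to share a vertex $w$. But $w \in V(P) \subseteq X$ while $w \in V(Q) \subseteq V(\widehat{G}) \setminus X$, a contradiction. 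Hence no such $Q$ exists.

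For the reverse implication (separation produces a crossing path), which I expect to be the crux, I would exploit the defining feature of the face graph: within any bounded face $f$ of $G$, all non-$X$ boundary vertices of $f$ are mutually connected in $\widehat{G} - X$ through $v_f$, since $v_f \notin X$. Assume there is no $(u,v)$-path in $\widehat{G} - X$, let $C$ be the component of $u$ in $\widehat{G} - X$, and call a bounded face $f$ \emph{live} if $v_f \in C$. Let $R$ be the union of the closures of the live faces. The key local claim is that every edge of $G$ separating a live face from a non-live bounded face has both endpoints in $X$: a non-$X$ endpoint would lie on the live face and hence in $C$, which would make the neighboring face live as well. Since $v \notin C$ and $v \notin X$, no live face has $v$ on its boundary, so $v \notin R$, whereas $u \in \partial R$. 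I would then take the component $V^{*}$ of $D \setminus R$ containing $v$, where $D$ is the closed disk bounded by $W$; its boundary consists of an arc of $W$ through $v$ together with internal edges of $\widehat{G}$, all of whose vertices lie in $X$ by the local claim. Reading off the endpoints $p, q$ where this internal boundary meets $W$, the $W$-arc flanked by $p$ and $q$ contains $v$ but not $u$, so $(p, q)$ crosses $(u,v)$; and the internal boundary from $p$ to $q$ is a walk in $G[X]$, which contains the desired crossing path.

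The main obstacle is this reverse direction, specifically the topological bookkeeping around the region $R$: verifying that the live faces assemble into a region whose boundary decomposes cleanly into $W$-arcs and internal ``$X$-chords,'' and that the chord flanking $v$ yields endpoints lying on the two distinct open arcs determined by $u$ and $v$. Care is also needed when $G$ is not $2$-connected, so that $W$ repeats vertices and $D$ is a disk only after suitable identifications; I would handle this by phrasing everything in terms of the closed walk $W$ and the crossing relation already set up for walks, rather than assuming $W$ is a simple cycle. The easy direction, by contrast, is essentially immediate once the Jordan-curve observation is invoked inside $\widehat{G}$.
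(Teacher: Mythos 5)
Your ``only if'' direction is exactly the paper's: both arguments apply the Jordan-curve observation to one path in $G[X]$ and one in $\widehat{G}-X$, viewed as crossing paths in the plane graph $\widehat{G}$, and derive a contradiction from $V(P)\subseteq X$ versus $V(Q)\cap X=\varnothing$. Your ``if'' direction, however, takes a genuinely different route. The paper argues \emph{directly}: if no path in $G[X]$ crosses $(u,v)$, then $u$ and $v$ lie in a common face of the plane graph $G[X]$, so there is a polygonal $(u,v)$-curve $\gamma$ in that face; tracing $\gamma$ yields an alternating sequence of vertices, edges and faces of $G$, and a short case analysis (vertex--face, vertex--edge, edge--face) converts this sequence into a $(u,v)$-walk in $\widehat{G}-X$ by substituting $v_f$ for each face and a non-$X$ endpoint for each edge. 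You instead argue the \emph{contrapositive} by a primal--dual region-growing argument: grow the set of ``live'' faces reachable from $u$ in $\widehat{G}-X$ and read a crossing $X$-chord off the frontier of that region. Your route has the appeal of exhibiting the separator explicitly, but be aware that the step you defer --- showing the frontier of the live region assembles into a \emph{single} path in $G[X]$ whose two endpoints lie on $W$ and interleave with occurrences of $u$ and $v$ in the closed walk --- is not routine bookkeeping; it is the entire content of the hard direction, and it is delicate precisely where you flag it (when $G$ is not $2$-connected, $W$ repeats vertices, $R$ may be empty or may fail to contain $u$, and the frontier may pinch at cut vertices or consist of several chords). The paper's curve-to-walk conversion is designed to avoid ever analyzing the structure of the separator: it only needs the \emph{existence} of a curve in a face of $G[X]$, at the cost of asserting (also somewhat informally) that ``no crossing path'' puts $u$ and $v$ in a common face. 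If you pursue your version, the clean way to finish is to work in the sphere obtained by capping the unbounded face, take $R$ to be the closure of the footprint of the component $C$ of $u$ in $\widehat{G}-X$ (faces, edges, and vertices of $C$, not just face closures), and apply unicoherence to get a connected frontier; as written, the proposal is a correct plan whose crux remains to be executed.
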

\begin{proof} 
We prove the forward direction by assuming there is some $(u,v)$-path, say $P$, in $\widehat{G} - X$. If there was some $(x,y)$-path in $G[X]$ so that $(u, v)$ and $(x, y)$ cross in $W$, then it would necessarily share a vertex with $P$. This is impossible since $V(P)$ is disjoint with $X$.

For the reverse implication, restrict our embeddings of $G$ and $\widehat{G}$ to the surface obtained by throwing out the unbounded face of $G$ (which is also the unbounded face of $\widehat{G}$). If there is no path in $G[X]$ whose endpoints cross $(u, v)$ in $W$, then $u$ and $v$ are contained in a single face of $G[X]$. By the definition of a face, there is some polygonal $(u, v)$-curve, say $\gamma$, contained in that face. As we follow along this curve from $u$ to $v$, we obtain a finite multi-sequence, $M$, of vertices, edges and faces of $G$ that intersect $\gamma$. This multi-sequence starts with the vertex, $u$, and is followed either by a face or an edge of $G$. In fact, any consecutive pair in $M$ has of one of six forms: (vertex, face), (vertex, edge), (edge, face), (edge, vertex), (face, edge) or (face, vertex). We will use $M$ to find a $(u, v)$-walk in $\widehat{G} - X$. 

Obtain a multi-sequence, $M'$, of vertices from $\widehat{G} - X$ as follows. For each face, $f$, of $G$ in $M$, replace $f$ with $v_f \in V(\widehat{G})$. Clearly, $v_f \notin X$ since $X \subseteq V(G)$. For each edge, $e$, of $G$ in $M$, since $e$ is not an edge of $G[X]$, at least one of its endpoints is in $V(G)\backslash X$. Therefore, we can replace $e$ with one of its endpoints, $v_e$, that is not in $X$ (choosing at random if both endpoints are not in $X$). 

In search of a $(u, v)$-walk, we consider the six possible types of consecutive pairs of vertices in $M'$. In fact, since the edge relation is symmetric, it is enough to consider just the following three types:

	\textbf{Type 1: (vertex, face).} If $\gamma$ transitions from a vertex, $w \in V(G - X)$, to a face, $f$, of $G$, then $w$ must be on the boundary of $f$. By the definition of $\widehat{G}$, we know that $wv_f \in E(\widehat{G})$.
	
	\textbf{Type 2: (vertex, edge).} If $\gamma$ transitions from a vertex, $w \in V(G - X)$, to an edge, $e$, of $G$, then $w$ must be an endpoint of $e$. Thus, either $w = v_e$ or $wv_e \in E(G - X)$.
	
	\textbf{Type 3: (edge, face).} If $\gamma$ transitions from an edge, $e$, of $G$ to a face, $f$, of $G$, then $e$ is on the boundary of $f$ and so are both of its endpoints. Thus, $v_ev_f \in E(\widehat{G} - X)$.
	
	From the case analysis, we see that $M'$ must contain a $(u, v)$-walk in $\widehat{G} - X$ as a subsequence. Therefore, there is a $(u, v)$-path in $\widehat{G} - X$.
\end{proof}

We are ready to give an alternative characterization for a vertex set covering a net in a plane graph. 

\begin{thm} \label{min_cover} Let $G$ be a connected plane graph with a 3-frame $\mathcal{F}$ and let $C \subseteq V(G)$. Then $C$ covers $\mathbf{N}(G, \mathcal{F})$ if and only if there is some $Y \in \mathbf{N}(\widehat{G}, \mathcal{F})$ such that $Y \cap V(G) \subseteq C$. 
\end{thm}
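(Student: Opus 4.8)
The plan is to reformulate the statement in terms of the complement set $D := V(G)\setminus C$ and the two vertex families it induces: the \emph{primal} vertices $D$ and the \emph{dual} vertices $C\cup V_F$, where $V_F := V(\widehat{G})\setminus V(G)$ is the set of face vertices. A set $Y\in\mathbf{N}(\widehat{G},\mathcal{F})$ with $Y\cap V(G)\subseteq C$ is precisely a connected subset of $\widehat{G}[C\cup V_F]=\widehat{G} - D$ meeting all three sides; call such a set a \emph{dual vine}. Dually, a vine of $G$ that avoids $C$ is a connected subset of $G[D]=G-C$ meeting all three sides; call it a \emph{primal vine in $G-C$}. In this language the theorem reads: $C$ covers $\mathbf{N}(G,\mathcal{F})$ (equivalently, no primal vine avoids $C$) if and only if a dual vine exists. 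I will therefore prove two halves: (i) a dual vine and a primal vine in $G-C$ cannot coexist, and (ii) if no dual vine exists then some primal vine avoids $C$.

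Half (i) gives the reverse implication and is the easy part. Suppose $Y$ is a dual vine and let $X\in\mathbf{N}(G,\mathcal{F})$ be arbitrary. Since $\mathcal{F}$ is also a $3$-frame of $\widehat{G}$, we have $X\in\mathbf{N}(\widehat{G},\mathcal{F})$, so Lemma~\ref{net_bramble} applied to $\widehat{G}$ forces $X\cap Y\neq\varnothing$. Any $z\in X\cap Y$ lies in $X\subseteq V(G)$, hence $z\in Y\cap V(G)\subseteq C$, so $z\in X\cap C$. Thus every vine of $G$ meets $C$, i.e. $C$ covers $\mathbf{N}(G,\mathcal{F})$; in particular no primal vine avoids $C$. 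This argument is exactly why the two kinds of vine are incompatible.

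Half (ii) is the forward implication and the crux. I would argue by contraposition: assuming no dual vine exists, I construct a connected subset of $G-C$ meeting all three sides, witnessing that $C$ fails to cover. The engine is Lemma~\ref{face} with $X=D$, which says that two boundary vertices of $W\cap C$ are joined in $\widehat{G}-D$ if and only if no path of $G[D]=G-C$ has endpoints crossing them in $W$. The non-existence of a dual vine means that no component of $\widehat{G}-D$ simultaneously reaches $C$-vertices of all three sides; in the generic case where the corners $u_0,u_j,u_k$ lie in $C$ this forces them to be pairwise separated in $\widehat{G}-D$. Feeding each separated pair into Lemma~\ref{face} produces paths in $G-C$ whose endpoints cross on $W$. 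By the Jordan Curve Theorem fact quoted before Lemma~\ref{face}, any two such crossing paths share a vertex, so their union is connected; and because their endpoints interleave the three sides, this union meets $blue$, $red$, and $yellow$. Assembling the crossing paths thus yields a single connected subset of $G-C$ meeting all three sides, the desired primal vine.

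The main obstacle is the three-terminal bookkeeping in Half (ii): I must choose the separated pairs so that the crossing paths they produce (a) actually reach all three colors between them and (b) are guaranteed to cross one another, and I must handle the degenerate configurations in which an entire side, or one or more corners, lies inside $C$ (so that the relevant boundary representative is a non-corner vertex, or a side contributes no vertex to $D$ at all). These cases are where the clean two-terminal duality of Lemma~\ref{face} must be upgraded to the three-sided setting; once the crossing paths are correctly assembled, connectedness and the meeting-all-three-sides property follow from the Jordan Curve Theorem as above. It is worth noting that the connected set produced this way is a tree drawn in the plane, matching the characterization of minimal covers advertised in the abstract.
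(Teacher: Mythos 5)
Your Half (i) is correct and is essentially the paper's argument for that direction. The gap is in Half (ii), specifically in the sentence ``By the Jordan Curve Theorem fact quoted before Lemma~\ref{face}, any two such crossing paths share a vertex.'' That fact applies to two paths whose endpoint pairs cross \emph{each other} in $W$; the paths you extract from Lemma~\ref{face} are only guaranteed to have endpoints crossing the respective \emph{corner pairs} $(u_0,u_j)$, $(u_j,u_k)$, $(u_k,u_n)$, and two such paths need not cross one another. Concretely, with $n=30$, $j=10$, $k=20$, Lemma~\ref{face} could return a path with endpoints $u_5,u_{25}$ (crossing $(u_0,u_{10})$), another with endpoints $u_8,u_{15}$ (crossing $(u_{10},u_{20})$), and a third with endpoints $u_{18},u_{22}$ (crossing $(u_{20},u_{30})$); these three endpoint pairs are pairwise nested or disjoint, so no two of the paths are forced to intersect, and no single one of them meets all three colors. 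The union may therefore be disconnected, and your construction does not yield a primal vine. You flag this as ``the main obstacle,'' but then assert that once the paths are ``correctly assembled'' the Jordan Curve Theorem finishes the job; the assembly is exactly what is missing, and some extremal choice of the paths (not merely any output of Lemma~\ref{face}) would be needed to force them to link up.

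The paper sidesteps this entirely by arguing the forward implication directly rather than by contraposition. Assuming $C$ covers the net, it takes $a$ maximal in $\{0,\dots,j\}$ such that some path of $G-C$ joins the yellow side to $u_a$ (with $b$, $c$ defined analogously on the other sides), deduces $u_{a+1},u_{b+1},u_{c+1}\in C$, and then uses the maximality of $a$, $b$, $c$ together with the covering hypothesis to show that \emph{no} path of $G-C$ has endpoints crossing $(u_{a+1},u_{b+1})$ or $(u_{b+1},u_{c+1})$. Lemma~\ref{face} then supplies paths in $\widehat{G}-(V(G)\setminus C)$ joining these three vertices, whose union is the desired $Y$. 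The extremal choice of the three anchor vertices is precisely the device that replaces your unproven ``the crossing paths link up'' step. If you wish to keep the contrapositive framing, you would need a symmetric extremality argument on the primal side, in addition to the degenerate-case analysis you already anticipate when a corner or an entire side lies outside $C$.
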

\begin{proof} For the backward implication, let $Y \in \mathbf{N}(\widehat{G}, \mathcal{F})$. Consider any $X \in \mathbf{N}(G, \mathcal{F})$. Since $G$ is a subgraph of $\widehat{G}$, we know that $X \in \mathbf{N}(\widehat{G}, \mathcal{F})$, and lemma~\ref{net_bramble} implies that $X \cap Y \neq \varnothing$. Because $X \subseteq V(G)$, we have $X \cap (Y \cap V(G)) \neq \varnothing$. And since $X$ was chosen arbitrarily from $\mathbf{N}(G, \mathcal{F})$, we can conclude that $Y \cap V(G)$ covers $\mathbf{N}(G, \mathcal{F})$.

For the reverse implication, suppose $C$ covers $\mathbf{N}(G, \mathcal{F})$. Define $W = (u_0, u_1, ..., u_n)$ and $0 \leq j \leq k \leq n$ so that $\mathcal{F} = (W, j, k)$. Let $a$ be the maximum index in $\{0, ..., j\}$ such that there is a path, say $P_a$, from some vertex in $u_kWu_n$ to $u_a$ in $G - C$. Let $b$ be the maximum index in $\{j, ..., k\}$ such that there is a path, say $P_b$, from some vertex in $u_0Wu_j$ to $u_b$ in $G - C$, and let $c$ be the maximum index in $\{k, ..., n\}$ such that there is a path, say $P_c$, from a vertex in $u_jWu_k$ to $u_c$ in $G - C$. Since $C$ covers $\mathbf{N}(G, \mathcal{F})$, we see that $a<j$, $b<k$ and $c<n$, and $u_{a+1}, u_{b+1}, u_{c+1} \in C$.

We claim that by our choice of $a$ and $b$, there is no path in $G - C$ whose endpoints cross $(u_{a+1}, u_{b+1})$. For the sake of contradiction, suppose such a path, say $Q$, does exist. Let $u_s, u_t \in V(G) - C$ be the endpoints of $Q$, where $a+1 < s < b+1$ and $b+1 < t \leq n$ or $0 < t < a+1$. We consider the two possibilities for $s$. For example, see Figure~\ref{fig4}.

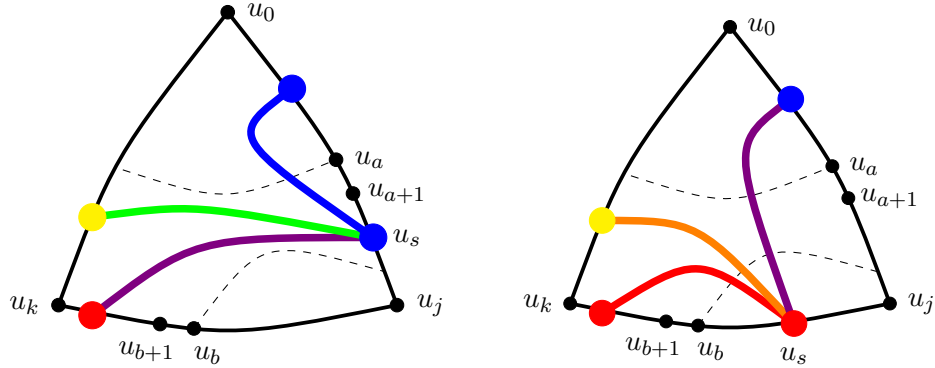
\begin{figure}[h]
\begin{center}
\begin{tikzpicture}[scale=.9]
	\draw [line width=0.5mm](0, 0) .. controls (2.5, -0.5) .. (5, 0) .. controls (4.2, 2.165) .. (2.5, 4.33) .. controls (0.8, 2.165) .. cycle;
	\draw [fill] (0,0) circle [radius=0.1];
	\draw [fill] (5,0) circle [radius=0.1];
	\draw [fill] (2.5, 4.33) circle [radius=0.1];
	\draw [fill, blue] (4.65, 1) circle [radius=0.2];
	\draw [fill, red] (0.5, -0.15) circle [radius=0.2];
	\draw [fill, yellow] (0.5, 1.3) circle [radius=0.2];
	\draw [fill, blue] (3.45, 3.2) circle [radius=0.2];
	\draw [fill] (2, -0.345) circle [radius=0.1];
	\draw [fill] (1.5, -0.28) circle [radius=0.1];
	\draw [fill] (4.1, 2.15) circle [radius=0.1];
	\draw [fill] (4.35, 1.65) circle [radius=0.1];
	\node at (3, 4.33) {$u_0$};
	\node at (5.5, 0) {$u_j$};
	\node at (-0.5, 0) {$u_k$};
	\node at (2.2, -0.745) {$u_b$};
	\node at (1.3, -0.68) {$u_{b+1}$};
	\node at (4.6, 2.15) {$u_a$};
	\node at (5, 1.65) {$u_{a+1}$};
	\node at (5.15, 1) {$u_s$};
	
	\begin{scope}[on background layer]
        	\draw[dashed] (0.95, 2) .. controls (2.5, 1.5) .. (4.1, 2.15);
	\draw[dashed] (2, -0.345) .. controls (3, 1) .. (4.8, 0.5);
	\draw[line width=1mm, violet] (4.65, 1) .. controls (2, 1) .. (0.5, -0.15);
	\draw[line width=1mm, green] (4.65, 1) .. controls (2, 1.5) .. (0.5, 1.3);
	\draw[line width=1mm, blue] (4.65, 1) .. controls (2.5, 2.5) .. (3.45, 3.2);
        \end{scope}
\end{tikzpicture}
\hspace{15pt}
\begin{tikzpicture}[scale=.85]
	\draw [line width=0.5mm](0, 0) .. controls (2.5, -0.5) .. (5, 0) .. controls (4.2, 2.165) .. (2.5, 4.33) .. controls (0.8, 2.165) .. cycle;
	\draw [fill] (0,0) circle [radius=0.1];
	\draw [fill] (5,0) circle [radius=0.1];
	\draw [fill] (2.5, 4.33) circle [radius=0.1];
	\draw [fill, red] (3.5, -0.313) circle [radius=0.2];
	\draw [fill, red] (0.5, -0.15) circle [radius=0.2];
	\draw [fill, yellow] (0.5, 1.3) circle [radius=0.2];
	\draw [fill, blue] (3.45, 3.2) circle [radius=0.2];
	\draw [fill] (2, -0.345) circle [radius=0.1];
	\draw [fill] (1.5, -0.28) circle [radius=0.1];
	\draw [fill] (4.1, 2.15) circle [radius=0.1];
	\draw [fill] (4.35, 1.65) circle [radius=0.1];
	\node at (3, 4.33) {$u_0$};
	\node at (5.5, 0) {$u_j$};
	\node at (-0.5, 0) {$u_k$};
	\node at (2.2, -0.745) {$u_b$};
	\node at (1.3, -0.68) {$u_{b+1}$};
	\node at (4.6, 2.15) {$u_a$};
	\node at (5, 1.65) {$u_{a+1}$};
	\node at (3.5, -0.813) {$u_s$};
	
	\begin{scope}[on background layer]
        	\draw[dashed] (0.95, 2) .. controls (2.5, 1.5) .. (4.1, 2.15);
	\draw[dashed] (2, -0.345) .. controls (3, 1) .. (4.8, 0.5);
	\draw[line width=1mm, red] (3.5, -0.313) .. controls (2, 0.8) .. (0.5, -0.15);
	\draw[line width=1mm, orange] (3.5, -0.313) .. controls (2, 1.3) .. (0.5, 1.3);
	\draw[line width=1mm, violet] (3.5, -0.313) .. controls (2.5, 2.5) .. (3.45, 3.2);
        \end{scope}
\end{tikzpicture}
\caption{All possible ways the endpoints of a path in $G - C$ might cross $(u_{a+1}, u_{b+1})$.} \label{fig4}
\end{center}
\end{figure}

\textbf{Case 1:} Suppose $a + 1 < s \leq j$. If $b+1 < t \leq k$, then $Q$ is a path from $u_0Wu_j$ to $u_t$ and we contradict the maximality of $b$. If $k < t \leq n$, then $Q$ is a path from $u_kWu_n$ to $u_s$ and we contradict the maximality of $a$. Finally, if $0 < t < a+1$, then the endpoints of $P_a$ cross the endpoints of $Q$ in $W$, so $P_a$ and $Q$ are subpaths of the same component of $G - C$. This component has a path from a vertex in $u_kWu_n$ to $u_s$, contradicting the maximality of $a$. Therefore, Case 1 leads to a contradiction.

\textbf{Case 2:} Suppose $j < s < b + 1$. If $b+1 < t \leq k$, then the endpoints of $P_b$ cross the endpoints of $Q$ in $W$.  Hence, $P_b$ and $Q$ are subpaths of the same component of $G - C$, and this component has a path from a vertex in $u_0Wu_j$ to $u_t$, contradicting the maximality of $b$. If $k < t \leq n$, then the endpoints of $P_b$ cross the endpoints of $Q$ in $W$, so $P_b$ and $Q$ are subpaths of the same component of $G - C$. But this component must contain vertices in all three sides of $\mathcal{F}$, so it contains an $\mathcal{F}$-vine. This contradicts the fact that $C$ covers $\mathbf{N}(G, \mathcal{F})$.  Finally, if $0 < t < a+1$, then the endpoints of $P_a$ cross the endpoints of $Q$ in $W$, so $P_a$ and $Q$ are subpaths of the same component of $G - C$. But this component must contain vertices in all three sides of $\mathcal{F}$, so it contains an $\mathcal{F}$-vine. This contradicts the fact that $C$ covers $\mathbf{N}(G, \mathcal{F})$. Therefore, case 2 leads to a contradiction.

Let $X = V(G) - C$. We have seen there is no path in $G[X]$ whose endpoints cross $(u_{a+1}, u_{b+1})$. Lemma~\ref{face} implies there is a $(u_{a+1}, u_{b+1})$-path in $\widehat{G} - X$. The same argument shows there is a $(u_{b+1}, u_{c+1})$-path in $\widehat{G} - X$. Therefore, $V(\widehat{G}) \backslash X$ contains an $\mathcal{F}$-vine, say $Y$, in $\widehat{G}$. By the definition of $X$, $Y \cap V(G) \subseteq C$, which completes the proof.
\end{proof}

\begin{cor} Let $G$ be a connected plane graph with at 3-frame $\mathcal{F}$. Then  
$$\emph{\ord}(\mathbf{N}(G, \mathcal{F})) = \min\{|Y \cap V(G)| : Y \in \mathbf{N}(\widehat{G}, \mathcal{F}) \}.$$
\end{cor}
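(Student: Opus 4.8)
The plan is to derive the corollary directly from Theorem~\ref{min_cover} together with the definition $\ord(\B) = \min\{|C| : C \text{ covers } \B\}$, by establishing the two inequalities separately. Since the theorem already furnishes the biconditional linking covers of $\mathbf{N}(G,\mathcal{F})$ to vines of $\widehat{G}$, the corollary is essentially a restatement of that result at the level of minimum cardinalities.

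First I would prove $\ord(\mathbf{N}(G,\mathcal{F})) \le \min\{|Y \cap V(G)| : Y \in \mathbf{N}(\widehat{G},\mathcal{F})\}$. Fix any $Y \in \mathbf{N}(\widehat{G},\mathcal{F})$ and set $C = Y \cap V(G)$. Then trivially $Y \cap V(G) \subseteq C$, so the backward implication of Theorem~\ref{min_cover} tells us $C$ covers $\mathbf{N}(G,\mathcal{F})$. By the definition of order, $\ord(\mathbf{N}(G,\mathcal{F})) \le |C| = |Y \cap V(G)|$. Since $Y$ was arbitrary, taking the minimum over all vines of $\widehat{G}$ yields the desired inequality.

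For the reverse inequality, let $C$ be a minimum cover of $\mathbf{N}(G,\mathcal{F})$, so that $|C| = \ord(\mathbf{N}(G,\mathcal{F}))$. The forward implication of Theorem~\ref{min_cover} produces some $Y \in \mathbf{N}(\widehat{G},\mathcal{F})$ with $Y \cap V(G) \subseteq C$, whence $|Y \cap V(G)| \le |C| = \ord(\mathbf{N}(G,\mathcal{F}))$. Thus the minimum on the right is at most $\ord(\mathbf{N}(G,\mathcal{F}))$, and combining the two inequalities gives equality.

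Before assembling these, I would check that both minima are actually attained: the collections $\mathbf{N}(G,\mathcal{F})$ and $\mathbf{N}(\widehat{G},\mathcal{F})$ are nonempty (since $G$ is connected and $W$ is peripheral, $V(G)$ is itself a vine touching all three sides, and $\mathbf{N}(G,\mathcal{F}) \subseteq \mathbf{N}(\widehat{G},\mathcal{F})$), and there are only finitely many candidate sets, so each minimum ranges over a nonempty finite collection. I expect no genuine obstacle here; the only point requiring care is correctly matching the two directions of Theorem~\ref{min_cover} to the two inequalities, together with the harmless specialization $C = Y \cap V(G)$ used in the first direction.
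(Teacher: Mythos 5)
Your proof is correct and matches the paper's intent: the paper states this corollary without proof as an immediate consequence of Theorem~\ref{min_cover}, and your two-inequality derivation (backward implication with $C = Y \cap V(G)$ for one direction, forward implication applied to a minimum cover for the other) is exactly the argument being left implicit. No issues.
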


Using the topology of a planar embedding, the problem of finding a cover of a net is equivalent to finding a connected subgraph of the face graph that intersects all three sides of the 3-frame. In the next section, we algorithmically minimize such a cover using shortest paths.

\section{Net-Alg: a minimum size cover of a net}

Let $G$ be a connected plane graph with a 3-frame $\mathcal{F}$. 
In order to develop an algorithm that finds a minimum order cover of a net, define the indicator function $\mathbf{1}_{G}$ on $V(\widehat{G})$ such that
$$\mathbf{1}_{G}(v) = \left\{\begin{array}{ l l }
			1, & v \in V(G) \\
			0, & v \in V(\widehat{G}) \backslash V(G). 
			\end{array}\right.$$
Let $G_{\leftrightarrow}$ denote a directed graph obtained from $G$ by replacing each edge with two directed edges of opposite orientation. Any directed path in $G_{\leftrightarrow}$ will then correspond to an undirected path of $G$ and vice versa. Moreover, two directed paths are vertex-disjoint in $G_{\leftrightarrow}$ just in case the corresponding paths in $G$ are vertex-disjoint.

From $\mathbf{1}_{G}$, we obtain a weight function, $\mathbf{1}_{G\leftrightarrow}$, on the edges of $\widehat{G}_{\leftrightarrow}$, where each edge weight is given by the weight of its terminal vertex under $\mathbf{1}_{G}$. We determine the weight of a directed path in $\widehat{G}_\leftrightarrow$ to be the sum of its edge weights. Then $\mathbf{1}_{G\leftrightarrow}$ gives us the following distance function on $\widehat{G}_\leftrightarrow$.
\begin{align*}
\delta_{\leftrightarrow}: V(\widehat{G}_\leftrightarrow) \times V(\widehat{G}_\leftrightarrow) &\rightarrow \{0, 1, 2, ...\}  \\
				(u, v) &\mapsto \min\left\{\sum_{e \in E(P)} \mathbf{1}_{G\leftrightarrow}(e) : P \ \text{is a $(u,v)$-path in} \ \widehat{G}_\leftrightarrow\right\}
\end{align*}

We use this distance function to define a specific structure possessed by any ``minimum weight'' $\mathcal{F}$-vine in $\widehat{G}$. This structure is essentially the three vertex case of the Steiner tree characterization given by Dreyfus and Wagner in \cite{Dreyfus1971}.

\begin{lem} \label{vine_cover} Let $G$ be a connected plane graph with a 3-frame $\mathcal{F}$. Suppose $Y \in \mathbf{N}(\widehat{G}, \mathcal{F})$ such that $|Y \cap V(G)|$ is minimum. Then $\widehat{G}_{\leftrightarrow}[Y]$ contains a rooted tree $T_{\leftrightarrow}$, where $T_\leftrightarrow$ is the union of three internally disjoint shortest paths (with distance given by $\delta_{\leftrightarrow}$), each starting at the root and terminating in one of the three sides of $\mathcal{F}$. 
\end{lem}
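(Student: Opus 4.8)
The plan is to realize the minimum-weight vine $Y$ as a Steiner tree on the three sides and then use a standard exchange argument to force each of its three arms to be geodesic. The key is a bookkeeping identity built into the weighting: for any directed path $P=(v_0,v_1,\dots,v_\ell)$ in $\widehat{G}_{\leftrightarrow}$, its weight $w(P):=\sum_{e\in E(P)}\mathbf{1}_{G\leftrightarrow}(e)$ equals $\sum_{i=1}^{\ell}\mathbf{1}_G(v_i)$, i.e.\ it counts the $G$-vertices on $P$ other than its initial vertex. This identity converts ``number of $G$-vertices in a subtree'' into ``total $\delta_{\leftrightarrow}$-weight of its arms,'' which is exactly the quantity $Y$ is chosen to minimize.

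First I would extract a spider inside $\widehat{G}[Y]$. Since $Y$ is an $\mathcal{F}$-vine, $\widehat{G}[Y]$ is connected and contains vertices $y_b\in Y\cap blue$, $y_r\in Y\cap red$ and $y_y\in Y\cap yellow$. Fix a spanning tree $S$ of $\widehat{G}[Y]$ and let $r$ be the median of $y_b,y_r,y_y$ in $S$, i.e.\ the unique vertex lying on all three of the paths $y_bSy_r$, $y_rSy_y$, $y_ySy_b$. The arms $A_b=rSy_b$, $A_r=rSy_r$, $A_y=rSy_y$ are pairwise internally disjoint and their union is a subtree $T_0\subseteq\widehat{G}[Y]$; orienting each arm away from $r$ yields a candidate $T_{\leftrightarrow}\subseteq\widehat{G}_{\leftrightarrow}[Y]$ rooted at $r$. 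By the identity above, since each non-root vertex of $T_0$ lies on exactly one arm, the total arm weight equals $|V(T_0)\cap V(G)|-\mathbf{1}_G(r)$. (Degenerate configurations in which sides share vertices, or in which $r$ coincides with a terminal, collapse one or more arms to a single vertex and are handled by the same argument.)

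It remains to show each arm is a shortest path, which I would do by contradiction against the minimality of $|Y\cap V(G)|$. Suppose $A_b$ is not geodesic, so $\delta_{\leftrightarrow}(r,y_b)<w(A_b)$ and there is a strictly lighter $(r,y_b)$-path $P'_b$ in $\widehat{G}_{\leftrightarrow}$. Set $Y'=\{r\}\cup V(P'_b)\cup V(A_r)\cup V(A_y)$. Then $Y'$ is connected (each of $P'_b,A_r,A_y$ contains $r$) and still meets all three sides (through $y_b,y_r,y_y$), so $Y'\in\mathbf{N}(\widehat{G},\mathcal{F})$. Counting $G$-vertices with the identity, and noting that overlaps among $P'_b,A_r,A_y$ can only decrease the count,
$$|Y'\cap V(G)|\le \mathbf{1}_G(r)+w(P'_b)+w(A_r)+w(A_y)< \mathbf{1}_G(r)+w(A_b)+w(A_r)+w(A_y)=|V(T_0)\cap V(G)|\le |Y\cap V(G)|,$$
contradicting the choice of $Y$. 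Hence $A_b$, and by symmetry $A_r$ and $A_y$, are geodesics, so $T_0$ (equivalently $T_{\leftrightarrow}$) is precisely the required union of three internally disjoint shortest paths, rooted at $r$ and terminating in the three sides.

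I expect the main obstacle to be not the exchange step itself but making the vertex-counting fully rigorous: verifying the weight-equals-$G$-vertex-count identity for the oriented arms, checking that it behaves correctly under the overlaps introduced when one arm is rerouted (so the inequality is ``$\le$'' exactly where it must be and strict where it must be), and dispatching the degenerate cases where sides overlap or the median equals a terminal. These are the points where the argument could silently break, so I would treat them explicitly rather than as routine.
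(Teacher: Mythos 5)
Your proposal is correct and follows essentially the same route as the paper: both extract a three-armed spider inside $\widehat{G}[Y]$ (the paper via a minimum blue--red path $P$ plus a minimum path $Q$ from $P$ to yellow, you via a spanning-tree median) and then use the minimality of $|Y\cap V(G)|$ in an exchange argument to force each arm to be a $\delta_{\leftrightarrow}$-geodesic. Your version is somewhat more explicit about the weight-equals-vertex-count bookkeeping, which the paper leaves implicit.
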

\begin{proof} By definition, $\widehat{G}[Y]$ contains a path with one endpoint in the $blue$ side of $\mathcal{F}$ and the other in the $red$ side. Let $P$ be such a path with $|V(P) \cap V(G)|$ minimum, and let $u_b$ and $u_r$ be the endpoints of $P$ in $blue$ and $red$, respectively. By definition, $Y$ also contains at least one vertex in the $yellow$ side of $\mathcal{F}$. Let $u_y \in Y$ be in $yellow$. Since $\widehat{G}[Y]$ is connected, there is a path in $\widehat{G}[Y]$ starting in $P$ and terminating at $u_y$. Let $Q$ be such a path minimizing $|V(Q) \cap V(G)|$ that is internally disjoint from $P$, and let $v_0$ be the endpoint of $Q$ in $P$. Then $\widehat{G}_\leftrightarrow[Y]$ contains three directed subpaths, $v_0 Pu_b$, $v_0 Pu_r$ and $Q$. By definition, these paths are internally disjoint. Moreover, the minimality of $|Y \cap V(G)|$ implies there is no shorter path from $v_0$ to any side of the 3-frame, otherwise we could use it to replace the current path to that side.
\end{proof}

With this structural characterization in hand, we can search for a minimum size cover of a net using the single-source shortest path algorithm for planar graphs by Henzinger et al. \cite{Henzinger1997}

\begin{alg}[Net-Alg] \ 

\noindent \textbf{Input} A connected plane graph, $G$, and a 3-frame of $G$, \\ $(W = (u_0, u_1, ..., u_n), j, k)$.  

\noindent \textbf{Idea:} Using the characterization in lemma~\ref{vine_cover}, we  search through each vertex in the face graph, finding shortest paths from a ``root'' vertex to each of the three sides of our net. By minimizing the sum of the distances given by these paths, we obtain a $(W, j, k)$-vine in $\widehat{G}$ using the fewest possible vertices from $G$.

\noindent \textbf{Initialization:} Construct the directed plane graph, $\widehat{G}_\leftrightarrow$. Let $|V(\widehat{G}_\leftrightarrow)| = s$, and give any order to the vertices, $V(\widehat{G}_\leftrightarrow) = \{v_1, ..., v_s\}$. Set $bestcover = s$ and $center = 0$.

\noindent\textbf{Iteration:}
\begin{enumerate}
	\item For each $i = 1, 2, ..., s$:
	\begin{enumerate}
		\item Run Henzinger et al.'s single-source shortest path algorithm \cite{Henzinger1997} on $\widehat{G}_\leftrightarrow$ with source $v_i$, obtaining a weighted distance, $\delta_\leftrightarrow(v_i, u)$, for each $u \in V(\widehat{G}_\leftrightarrow)$.
		\item Set
		\begin{align*}
		b(i) &= \min\{\delta_\leftrightarrow(v_i, u) : u \in V(u_0Wu_j)\} \\
		r(i) &= \min\{\delta_\leftrightarrow(v_i, u) : u \in V(u_jWu_k)\} \\
		y(i) &= \min\{\delta_\leftrightarrow(v_i, u) : u \in V(u_kWu_n)\} 
		\end{align*}
		\item Set $d(i) = b(i) + r(i) + y(i) + \mathbf{1}_G(v_i)$.
		\item Set $center= \left\{\begin{array}{ll} center & \text{ if } bestcover \leq d(i) \\
							i & \text{ if } bestcover > d(i)
							\end{array}\right.$
		\item Set $bestcover = \min\{bestcover, \ d(i)\}$.
	\end{enumerate}
	\item Run the single-source shortest path algorithm from \cite{Henzinger1997} on $\widehat{G}_\leftrightarrow$ with source, $center$, to find $P_b$, $P_r$ and $P_y$, shortest length paths for from $center$ to $V(u_0Wu_j)$, $V(u_jWu_k)$ and $V(u_kWu_n)$, respectively. Stop.
\end{enumerate}

\noindent \textbf{Output:} Define the vertex set $Y := V(P_b) \cup V(P_r) \cup V(P_y)$.
\end{alg}

\begin{thm} Given a connected plane graph $G$ with at 3-frame $\mathcal{F}$, Net-Alg runs in $O(|G|^2)$ time and outputs $Y\subseteq V(\widehat{G})$, where 
$$|Y \cap V(G)| = \emph{\ord}(\mathbf{N}(G, \mathcal{F})).$$
\end{thm}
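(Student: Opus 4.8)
The plan is to prove two things: the $O(|G|^2)$ running time and the correctness identity $|Y \cap V(G)| = \ord(\mathbf{N}(G,\mathcal{F}))$. For the running time, I would first record that $\widehat{G}$ is a plane graph with $O(|G|)$ vertices and edges: it has $|V(G)|$ original vertices plus one vertex per bounded face, and the number of bounded faces is at most $2|V(G)|-4$, while the new edges total at most the sum of the face degrees, which is $O(|E(G)|)=O(|G|)$. Hence $\widehat{G}_\leftrightarrow$ is constructed in $O(|G|)$ time and $s=|V(\widehat{G}_\leftrightarrow)|=O(|G|)$. Each pass of the main loop makes one call to Henzinger et al.'s single-source shortest path algorithm, which runs in linear time on the planar graph $\widehat{G}_\leftrightarrow$ with the nonnegative weights $\mathbf{1}_{G\leftrightarrow}$, and the side-minimizations producing $b(i),r(i),y(i)$ together with the constant-time updates of $center$ and $bestcover$ cost $O(|G|)$ per pass. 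Since the loop runs $s=O(|G|)$ times and the single final shortest-path call is $O(|G|)$, the total is $O(|G|^2)$.

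For correctness, abbreviate $\mathrm{OPT}=\ord(\mathbf{N}(G,\mathcal{F}))$; by the Corollary to Theorem~\ref{min_cover} we have $\mathrm{OPT}=\min\{|Z\cap V(G)| : Z\in\mathbf{N}(\widehat{G},\mathcal{F})\}$. The bookkeeping fact I would establish first is that, because each edge weight under $\mathbf{1}_{G\leftrightarrow}$ equals the weight of its terminal vertex, the weight $\delta_\leftrightarrow(v_i,u)$ of a shortest directed path from $v_i$ counts exactly the number of $V(G)$-vertices it traverses other than its source $v_i$. I would then prove the two inequalities $|Y\cap V(G)|\ge \mathrm{OPT}$ and $|Y\cap V(G)|\le \mathrm{OPT}$ separately.

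The lower bound is short: the output paths $P_b,P_r,P_y$ all start at $center$, so $Y=V(P_b)\cup V(P_r)\cup V(P_y)$ is connected and meets all three sides; thus $Y\in\mathbf{N}(\widehat{G},\mathcal{F})$ and the Corollary gives $|Y\cap V(G)|\ge \mathrm{OPT}$. For the reverse direction I would chain $|Y\cap V(G)|\le bestcover\le \mathrm{OPT}$. The first inequality uses that every output path contains $center$: since $Y\setminus\{center\}$ is contained in the union of the three path-interiors, and each interior carries exactly $b(center)$, $r(center)$, $y(center)$ vertices of $V(G)$ respectively, we get $|Y\cap V(G)|\le \mathbf{1}_G(center)+b(center)+r(center)+y(center)=d(center)=bestcover$. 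The second inequality is where Lemma~\ref{vine_cover} does the work: take an optimal vine $Z$ with $|Z\cap V(G)|=\mathrm{OPT}$ and let $T_\leftrightarrow\subseteq\widehat{G}_\leftrightarrow[Z]$ be its rooted tree of three internally disjoint shortest paths with root $v_0$. Because those paths meet only at $v_0$ and each path weight counts non-root $V(G)$-vertices once, the three weights sum to $|V(T_\leftrightarrow)\cap V(G)|-\mathbf{1}_G(v_0)\le \mathrm{OPT}-\mathbf{1}_G(v_0)$. At the iteration with $v_i=v_0$, each of $b(i),r(i),y(i)$ is at most the corresponding tree-path weight, so $d(v_0)\le \mathrm{OPT}$ and hence $bestcover=\min_i d(i)\le \mathrm{OPT}$. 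Combining the two bounds yields equality.

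The step I expect to be the main obstacle is the exact accounting tied to the terminal-vertex weighting: making rigorous that path weights count $V(G)$-vertices ``minus the source,'' and correctly handling the interaction of this offset with the root term $\mathbf{1}_G(v_i)$ in both directions. In particular, the reverse inequality relies crucially on the internal disjointness of the three paths of $T_\leftrightarrow$ (so their weights do not overcount), whereas the output paths $P_b,P_r,P_y$ need not be internally disjoint, so there the union is only bounded above by the sum; it is precisely the combination of these two facts, pointing in opposite directions, that closes the argument.
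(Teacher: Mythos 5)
Your proposal is correct and follows essentially the same route as the paper: the running-time analysis is identical, and the correctness argument rests on the same two pillars, namely Lemma~\ref{vine_cover} (an optimal vine contains a rooted tree of three internally disjoint shortest paths, so the minimization over all candidate roots $v_i$ finds an $\mathcal{F}$-vine with $|Y\cap V(G)|$ minimum) and the Corollary to Theorem~\ref{min_cover} (which converts that minimum into $\ord(\mathbf{N}(G,\mathcal{F}))$). The only difference is that you spell out the terminal-vertex weight accounting and the two inequalities explicitly, where the paper compresses this into a one-line appeal to the lemma.
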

\begin{proof} By construction, $Y$ is an $\mathcal{F}$-vine and lemma~\ref{vine_cover} implies $|Y \cap V(G)|$ is minimum among all such vines. Then theorem~\ref{min_cover} implies that $Y \cap V(G)$ is a minimum size cover of the $\mathcal{F}$-net in $G$.

In the initialization of the algorithm, we can construct $\widehat{G}$ in $O(|G|)$ time. From this graph, we can obtain $\widehat{G}_\leftrightarrow$ in $O(|G|)$ time since the number of edges in any planar graph contains at most $3(|G|-2)$ edges. In step 1 of the iteration, Henzinger's algorithm runs in $O(|G|)$ time. We run this algorithm for each vertex in $\widehat{G}$, so step 1 completes in $O(|G|^2)$ time. Step 2 runs in $O(|G|)$ time. Therefore, the running time of Net-Alg is $O(|G|^2)$.
\end{proof}

Now that we have an algorithm for finding a minimum net cover in a particular framed plane graph, we will use it to search a graph (and subgraphs) for large order nets.

\section{BT-Alg: upper and lower treewidth bounds}

We are interested not only in the order of a net in a particular framing of a plane graph, but more importantly in what nets are possible in subgraphs of that graph. Any net is hightly sensitive to the walk peripheral to the unbounded face of the embedding --- if this walk is something simple like a 3-cycle, it severely limits the complexity of the net. However, higher order nets may be lurking in the interior of this embedding.

\begin{defn} For any plane graph, $G$, let $\lambda(G)$ denote the maximum order of any net in a subgraph of $G$.
\end{defn}

\begin{lem} \label{cover_props} Let $G$ be a connected plane graph, let $W = (u_0, ..., u_n)$ be a closed walk peripheral to the unbounded face of $G$, let $\mathcal{F} = (W, j, k)$ be a 3-frame of $G$, and let $Y \subseteq V(\widehat{G})$ be an $\mathcal{F}$-vine obtained by Net-Alg. Suppose $G'$ is a connected component of $G - (Y\cap V(G))$. If $u_a, u_b \in \{u_0, ..., u_n\} \cap V(G')$ with $a \leq b$, then either $u_aWu_b$ or $(u_bWu_n, u_1Wu_a)$ is a walk in $G'$.
\end{lem}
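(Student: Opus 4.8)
The plan is to reduce the statement to a purely combinatorial claim about which vertices of $W$ survive in $G'$, and then close the argument with the Jordan-curve crossing fact stated just before Lemma~\ref{face}. Write $C = Y \cap V(G)$, so that $G'$ is a connected component of $G - C$ and $V(G') \cap C = \varnothing$. Since $V(G') \subseteq V(G)$, this also gives $V(G') \cap Y = \varnothing$, which is the disjointness I will exploit. Notably, I expect to need only that $Y$ is a connected $\mathcal{F}$-vine and that $G'$ is connected; the specific output structure of Net-Alg plays no role.

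First I would record the reduction: the arc $u_a W u_b$ is a walk in $G'$ exactly when none of its interior vertices $u_{a+1}, \dots, u_{b-1}$ lies in $C$. Indeed, if no interior vertex is in $C$, then every vertex of the arc lies in $V(G) \setminus C$, and since the arc is a connected subwalk of $W$ containing $u_a \in V(G')$, all of its vertices lie in the same component $G'$; because $G'$ is an induced subgraph, the consecutive edges of the arc are then edges of $G'$ as well. The complementary arc $(u_b W u_n, u_1 W u_a)$ behaves identically. Hence it suffices to show that interior $C$-vertices cannot occur in both arcs simultaneously.

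The heart of the argument is a proof by contradiction. Suppose both arcs contain an interior vertex of $C$: say $u_p$ with $a < p < b$, and $u_q$ in the complementary open arc (so $q > b$, or $q < a$). Because $Y$ is a connected $\mathcal{F}$-vine, $\widehat{G}[Y]$ is connected and contains a path $R$ from $u_p$ to $u_q$. Because $G'$ is connected and contains $u_a$ and $u_b$, it contains a path $P$ from $u_a$ to $u_b$, which is also a path in $\widehat{G}$. The index conditions $a < p < b$ together with $q > b$ (or $q < a$) mean the endpoint pairs $(u_a, u_b)$ and $(u_p, u_q)$ interleave, i.e.\ they cross in $W$ in the sense of the crossing definition. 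The Jordan Curve Theorem consequence stated before Lemma~\ref{face} then forces $P$ and $R$ to share a vertex. But $V(P) \subseteq V(G')$ while $V(R) \subseteq Y$, and these sets are disjoint --- a contradiction.

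The one place requiring care is the index bookkeeping in the cyclic walk: I must check that in each sub-case ($q > b$ and $q < a$) the four positions genuinely interleave, so that the crossing hypothesis is literally satisfied, and that the wrap-around identification $u_0 = u_n$ does not spoil the description of the complementary arc. These verifications are routine once the orderings are written out explicitly, and since they rely only on connectedness of $Y$ and of $G'$, the same proof in fact holds for an arbitrary $\mathcal{F}$-vine, not merely one produced by Net-Alg.
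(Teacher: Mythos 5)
Your proposal is correct and follows essentially the same route as the paper's proof: assume both arcs contain a vertex of $Y$, connect those two vertices by a path in $\widehat{G}[Y]$, and use the crossing/Jordan-curve fact to force an intersection with a $(u_a,u_b)$-path in $G'$, contradicting disjointness of $V(G')$ and $Y$. Your added reduction step and the observation that only connectedness of $Y$ (not the Net-Alg structure) is needed are both consistent with, and slightly more explicit than, the paper's argument.
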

\begin{proof} We prove the statement by contradiction. Suppose there are integers $s$ and $t$ so that $u_s \in V(u_aWu_b)$, $u_t \in V(u_bWu_n, u_1Wu_a)$ and $u_s, u_t \in Y$. $\widehat{G}[Y]$ is connected by definition, so there must be a $(u_s, u_t)$-path in $ \widehat{G}[Y]$. Since $(u_s, u_t)$ and $(u_a, u_b)$ cross in $W$, any $(u_a, u_b)$-path in $G'$ must contain a vertex in $Y$. This contradicts the hypothesis that $u_a$ and $u_b$ are contained in the same connected component of $G - (Y\cap V(G))$. 
\end{proof}

Lemma~\ref{cover_props} implies that for any connected component $G'$ of $G - (Y \cap V(G))$, a closed walk peripheral to  $G'$ can be decomposed into two internally disjoint subwalks overlapping on their endpoints. One of these subwalks is a subwalk of $W$ and the other has no interior vertices in $W$; that is, the interior vertices of this subwalk are uncolored in $G$. 

We are ready to present an algorithm for finding a high order net in a subgraph of $G$. Our algorithm is inspired by Bodlaender, Grigoriev and Koster's algorithm for finding a large square grid as a minor of a planar graph \cite{Bodlaender2008}. See Figure~\ref{fig5} for an example of its output.

\begin{alg}[BT-Alg] \label{find_net} \

\noindent\textbf{Input:} A connected planar graph, $G_0$.

\noindent\textbf{Initialization:} Use Hopcroft and Tarjan's algorithm \cite{Hopcroft1974} to obtain a planar embedding of $G_0$, then define a 3-frame $\mathcal{F}_0$ of $G_0$. Set $bestlow=-1$.

\noindent \textbf{Idea:} We create a rooted tree search by associating our framed graph with a root node. We then remove a minimum order cover of the net to separate the graph into component subgraphs, each associated with a child of the root node, and we describe a consistent way in which to define a new frame on each component. We keep track of the current greatest order of any net found in this process with $bestlow$.

\noindent\textbf{Iteration:} Input the plane graph, $G_i $, and a 3-frame, $\mathcal{F}_{i}$. Note that $i$ is a tuple. If $|G_i| < bestlow$, then let $C_i = V(G_i)$ and proceed to the next ordered node in a breadth-first search. If no nodes remain unsearched, then stop. Otherwise, $|G_i|\geq bestlow$. Then do:

\begin{enumerate}

\item Run Net-Alg 
for $G_i$ to find a minimum sized cover, $C_i\subseteq V(G)$, of $\mathbf{N}(G_i, \mathcal{F}_i)$. Set $bestlow = \max\{bestlow, |C_i|\}$. 

\item Let $G_{(i,1)}, G_{(i,2)}, \ldots, G_{(i,m)}$ be the components of $G_i-C_i$. 

\item For each $q\in [m]$, we choose a 3-frame that is consistent with the colors on the 3-frame of $G_i$ as follows. We define a 3-frame $\mathcal{F}_{(i,q)}$ of $G_{(i,q)}$ as follows. Every vertex that had a color in $G_i$ retains its color in $\mathcal{F}_{(i, q)}$. Lemma~\ref{cover_props} implies the colored vertices in $\mathcal{F}_{(i,q)}$ form a subwalk of $W_i$ containing at most two colors on its vertices; let $e_1$ and $e_2$ be the endpoints of this subwalk. Note that $W_i$ may be a single vertex (in which case $e_1=e_2$) or it may be the empty walk (in which case we will let $e_1=e_2$ be an arbitrary vertex peripheral to the unbounded face of $G_{(i, q)}$). Let $S_{(i, q)}$ be the $(e_1,e_2)$-subwalk peripheral to $G_{(i,q)}$ whose interior vertices are not colored in $\mathcal{F}_i$.
\begin{enumerate}
	\item \emph{If exactly one color is missing in $G_{(i, q)}$}:
Assign every vertex of $S_{(i,q)}$ with the missing color. 
 	\item \emph{If exactly two colors are missing in $G_{(i, q)}$}: 
	Choose (arbitrarily) one of the missing colors and assign every vertex of $S_{(i,q)}$ with that color. Then choose one of $e_1$ or $e_2$ and assign to it the other missing color in addition.
	\item \emph{If all three colors are missing in $G_{(i, q)}$}: 
	Choose (arbitrarily)  one of the missing colors and assign every vertex in $S_{(i, q)}$ with that color. Then choose any vertex from $S_{(i,q)}$ and assign it both of the two remaining colors in addition.
\end{enumerate} 

In each case, the three colors determine a 3-frame $\mathcal{F}_{(i, q)}$ of $G_{(i,q)}$. 

\item Recurse on child nodes $(i,1), (i,2), \ldots, (i,m)$ in a breadth-first search. 
\end{enumerate} 

\noindent\textbf{Output:} $KB=bestlow$. 
\end{alg}

\begin{figure}
\begin{center}
\begin{tikzpicture}[scale=.42]

	\node at (-1,4) {$G_0$};
	\node at (-4.5,-1) {$G_{(0,1)}$};
	\node at (8.5,-1) {$G_{(0,2)}$};
	\node at (-8,-6) {$G_{(0,1,1)}$};
	\node at (5.5,-10) {$G_{(0,1,2)}$};

	\draw [dashed] (1.732, 2) circle [radius=2.6];
	\draw [dashed] ({1.732-3}, {2-5}) circle [radius=2.6];
	\draw [dashed] ({1.732}, {2-10}) circle [radius=2.6];
	\draw [dashed] ({1.732-6}, {2-10}) circle [radius=2.6];
	\draw [dashed] ({1.732+3}, {2-5}) circle [radius=2.6];
	
	\draw ({1.732 - 1.34}, -8 + 2.23) -- ({1.732 - 3 + 1.34}, {2 - 5 - 2.23});
	\draw ({1.732-6 + 1.34}, -8 + 2.23) -- ({1.732 - 3 - 1.34}, {2 - 5 - 2.23});
	\draw ({1.732-3 + 1.34}, -3 + 2.23) -- ({1.732 - 1.34}, {2 - 2.23});
	\draw ({1.732+3 - 1.34}, -3 + 2.23) -- ({1.732 + 1.34}, {2 - 2.23});

	\draw [line width=0.5mm, blue] (1.732, 0) -- (0, 1) -- (0, 3);
	\draw [line width=0.5mm, red] (0, 3) -- (1.732, 4) -- (3.464, 3);
	\draw [line width=0.5mm, yellow] (3.464, 3) -- (3.464, 1) -- (1.732, 0);
	\draw (0, 1) -- (1.732, 2) -- (3.464, 1);
	\draw (0.866, 0.5) -- (2.598, 1.5) -- (2.598, 3.5);
	\draw (0.866, 3.5) -- (0.866, 1.5) -- (2.598, 0.5);
	\draw (1.732, 4) -- (1.732, 2);
	\draw (0, 2) -- (1.732, 3) -- (3.464, 2);
	
	\draw [fill, blue] (0, 1) circle [radius=0.1];
	\draw [fill, blue] (0, 2) circle [radius=0.1];
	\draw [fill, violet] (0, 3) circle [radius=0.1];
	\draw [fill, blue] (0.866, 0.5) circle [radius=0.1];
	\draw [fill] (0.866, 1.5) circle [radius=0.1];
	\draw [fill] (0.866, 2.5) circle [radius=0.1];
	\draw [fill, red] (0.866, 3.5) circle [radius=0.1];
	\draw [fill, green] (1.732, 0) circle [radius=0.1];
	\draw [fill] (1.732, 1) circle [radius=0.1];
	\draw [fill] (1.732, 2) circle [radius=0.1];
	\draw [fill] (1.732, 3) circle [radius=0.1];
	\draw [fill, red] (1.732, 4) circle [radius=0.1];
	\draw [fill, yellow] (2.598, 0.5) circle [radius=0.1];
	\draw [fill] (2.598, 1.5) circle [radius=0.1];
	\draw [fill] (2.598, 2.5) circle [radius=0.1];
	\draw [fill, red] (2.598, 3.5) circle [radius=0.1];
	\draw [fill, yellow] (3.464, 1) circle [radius=0.1];
	\draw [fill, yellow] (3.464, 2) circle [radius=0.1];
	\draw [fill, orange] (3.464, 3) circle [radius=0.1];
	
	\draw [line width=.75mm, dotted, gray] (1.732, 0) -- (1.732, 2) -- (2.598, 2.5) -- (2.598, 3.5);

	\draw [line width=0.5mm, yellow] ({3.464+3}, -2) -- ({3.464+3}, -4) -- ({2.598+3}, -4.5);
	\draw ({3.464+3}, -4) -- ({2.598+3}, -3.5);
	
	\draw [fill, yellow] ({2.598+3}, -4.5) circle [radius=0.1];
	\draw [fill] ({2.598+3}, -3.5) circle [radius=0.1];
	\draw [fill, yellow] ({3.464+3}, -4) circle [radius=0.1];
	\draw [fill, yellow] ({3.464+3}, -3) circle [radius=0.1];
	\draw [fill, orange] ({3.464+3}, -2) circle [radius=0.1];

	\draw [line width=0.5mm, green] ({0.866-3}, -4.5) -- (-3, -4);
	\draw [line width=0.5mm, blue] (-3, -4) -- (-3, -2);
	\draw [line width=0.5mm, red] ({0-3}, -2) -- ({1.732-3}, -1);
	\draw [line width=0.5mm, yellow] (-3, -4) -- ({0.886-3}, -3.5);
	\draw [line width=0.5mm, yellow] ({0.866-3}, -2.5) -- ({0.866-3}, -3.5);
	\draw ({0.866-3}, -1.5) -- ({0.866-3}, -2.5);
	\draw [line width=0.5mm, yellow] ({1.732-3}, -1) -- ({1.732-3}, -2);
	\draw [line width=0.5mm, yellow] ({0.866-3}, -2.5) -- ({1.732-3}, -2);
	\draw (-3, -3) -- ({0.866-3}, -2.5);
	
	\draw [fill, green] (-3, -4) circle [radius=0.1];
	\draw [fill, blue] (-3, -3) circle [radius=0.1];
	\draw [fill, violet] (-3, -2) circle [radius=0.1];
	\draw [fill, green] ({0.866-3}, -4.5) circle [radius=0.1];
	\draw [fill, yellow] ({0.866-3}, -3.5) circle [radius=0.1];
	\draw [fill, yellow] ({0.866-3}, -2.5) circle [radius=0.1];
	\draw [fill, red] ({0.866-3}, -1.5) circle [radius=0.1];
	\draw [fill, yellow] ({1.732-3}, -2) circle [radius=0.1];
	\draw [fill, orange] ({1.732-3}, -1) circle [radius=0.1];
	
	\draw [line width=.75mm, dotted, gray] (-3, -2) -- (-2, -2.5);
	
	\draw [line width=0.5mm, green] (-6, -9) -- ({0.866-6}, {-4.5-5});
	\draw [line width=0.5mm, blue] (-6, -9) -- (-6,-8);
	\draw [line width=0.5mm, yellow] (-6, -9) -- ({0.866-6}, {-3.5-5});
	
	\draw [fill, green] (-6, -9) circle [radius=0.1];
	\draw [fill, blue] (-6, -8) circle [radius=0.1];
	\draw [fill, green] ({0.866-6}, {-4.5-5}) circle [radius=0.1];
	\draw [fill, yellow] ({0.866-6}, {-3.5-5}) circle [radius=0.1];
	
	\draw [line width=0.5mm, yellow] ({1.732}, {-1-5}) -- ({1.732}, {-2-5});
	\draw [line width=0.5mm, red] ({1.732}, {-1-5}) -- ({0.866}, {-1.5-5});
	
	\draw [fill, red] ({0.866}, {-1.5-5}) circle [radius=0.1];
	\draw [fill, yellow] ({1.732}, {-2-5}) circle [radius=0.1];
	\draw [fill, orange] ({1.732}, {-1-5}) circle [radius=0.1];

\end{tikzpicture}
\caption{An example of BT-Alg resulting in $KB=5$.} \label{fig5}
\end{center}
\end{figure}
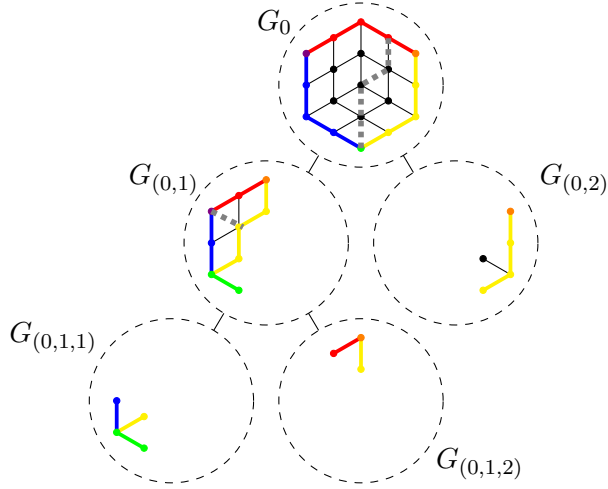

We were careful in how we defined the 3-frames for child nodes of the search tree so that the net covers obtained at each step of the algorithm have the following nice property as separating sets in $G$.

\begin{lem} \label{3_colors} Any subgraph associated to a node in the rooted search tree obtained in BT-Alg is incident with at most three cover sets found in a previous iteration of step 1.
\end{lem}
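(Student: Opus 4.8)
The plan is to argue by induction on the order in which nodes are processed in the breadth-first search of Algorithm~\ref{find_net} (BT-Alg), maintaining two invariants about a node's subgraph $G_i$ together with its frame $\mathcal{F}_i = (W_i, j, k)$. The first invariant is topological: every vertex of $G_i$ that is adjacent in $G_0$ to a previously-removed cover set lies on the peripheral walk $W_i$; equivalently, no interior vertex of $G_i$ borders any earlier cover. The second invariant is combinatorial and is the heart of the argument: each of the three colored sides of $\mathcal{F}_i$ borders at most one previously-found cover set, with some sides bordering none (being inherited from the original outer walk $W_0$). Granting the second invariant, the lemma is immediate: by the first invariant all incidences with earlier covers occur along $W_i = blue \cup red \cup yellow$, and since there are three sides, each contributing at most one cover set, $G_i$ is incident with at most three cover sets.

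For the base case, the root $G_0$ has no earlier covers, so both invariants hold vacuously. For the inductive step, I would suppose both invariants hold for a node $i$ and consider a child $G_{(i,q)}$, a component of $G_i - C_i$. Following the discussion after Lemma~\ref{cover_props}, the peripheral walk of $G_{(i,q)}$ splits into two arcs meeting at $e_1, e_2$: an \emph{old} arc $U$ that is a contiguous subwalk of $W_i$, and a \emph{new} arc $S_{(i,q)}$ whose interior vertices were interior to $G_i$. The new arc borders exactly the just-removed cover $C_i$: its interior vertices bordered no earlier cover by the first invariant for $G_i$, and removing $C_i$ makes them adjacent only to $C_i$. This simultaneously re-establishes the first invariant for $G_{(i,q)}$, since the vertices of $G_{(i,q)}$ off its peripheral walk are exactly the interior vertices of $G_i$ not lying on $S_{(i,q)}$, and these border neither $C_i$ nor, by the first invariant for $G_i$, any earlier cover.

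It remains to count and to re-establish the second invariant. By Lemma~\ref{cover_props}, the old arc $U$ is a contiguous subwalk of $W_i$ meeting at most two of the three colors of $\mathcal{F}_i$; by the (inductive) second invariant these at most two old sides together border at most two distinct earlier covers. Adding the single cover $C_i$ contributed by $S_{(i,q)}$ gives at most three covers incident with $G_{(i,q)}$, as required. To re-establish the second invariant, one checks that in the frame $\mathcal{F}_{(i,q)}$ produced by step~3, each retained (old-colored) side is a subwalk of an old side of $\mathcal{F}_i$ and hence still borders at most one earlier cover, while each side carved out of $S_{(i,q)}$ borders only $C_i$.

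The main obstacle I expect is the bookkeeping around the \emph{degenerate} sides created in the two- and three-missing-color cases of step~3, where an endpoint $e_1$ (or a single vertex of $S_{(i,q)}$) is assigned an extra color and so lies on two sides of $\mathcal{F}_{(i,q)}$ at once. Such a vertex can be adjacent to two distinct covers (its old side's cover and $C_i$), so the clean statement ``each side borders at most one cover'' is delicate for a one-vertex side. The resolution is to lean on the contiguity in Lemma~\ref{cover_props} rather than on a naive per-side count: because any retained subwalk touching a degenerate side must, by the at-most-two-colors constraint, also lie within one of the two full sides adjacent to it in the cyclic boundary order, the ``extra'' cover contributed by the shared vertex is always one already counted by a neighboring full side. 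Making this last point precise — that the at-most-two-\emph{colors} bound of Lemma~\ref{cover_props} yields an at-most-two-\emph{covers} bound even in the presence of degenerate sides — is the one step that requires genuine care; the topological separation underlying the first invariant, by contrast, follows routinely from the Jordan Curve Theorem exactly as used in Lemma~\ref{face}.
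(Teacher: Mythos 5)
Your overall plan---route every incidence to the peripheral walk and then charge covers to the three sides---is in the right spirit, but the inductive invariant you rely on is false, and not only in the degenerate one-vertex-side cases you flag. The endpoints $e_1,e_2$ of the new arc $S_{(i,q)}$ belong to both arcs of the boundary of $G_{(i,q)}$: they retain an old color, they are adjacent to $C_i$, and further vertices in the interior of the old arc may also be adjacent to $C_i$ if the vine runs close to $W_i$. Consequently, already at the first child a retained side of $\mathcal{F}_{(i,q)}$ can border two covers (the one it inherited and $C_i$), and the side carved out of $S_{(i,q)}$ borders $C_i$ together with whatever $e_1,e_2$ inherited. So the statement ``each side borders at most one previously-found cover'' cannot be re-established, and your own inductive step, run one level further down, gives at most $2+2+1$ covers rather than three. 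Your proposed repair (charging the extra cover at a shared vertex to a neighboring full side) treats a symptom: the real difficulty is that adjacency to covers does not respect the side decomposition, so no per-side count of covers will close the induction.

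The paper's proof inverts the quantifiers. Instead of bounding the number of covers seen by each color, it assigns to each incident cover $C_{i'}$ a color: one that was missing from the component of $G_{i'}-C_{i'}$ containing $G_i$ and was therefore newly assigned to the boundary facing $C_{i'}$. A vertex of $G_i$ adjacent to $C_{i'}$ carries that color from then on, so at no later ancestor's iteration can that color again be missing from the component containing $G_i$; hence no later incident cover can be charged to the same color. The map from incident covers to colors is therefore injective, and the bound of three falls out directly, with no per-side invariant to maintain. That injectivity argument---a color, once reintroduced on the boundary of $G_i$'s ancestry, blocks every later cover from claiming it---is the idea missing from your write-up, and it is exactly what replaces the bookkeeping you correctly identify as the hard step.
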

\begin{proof} Let $i$ be a node in the rooted search tree obtained by BT-Alg. If $u \in V(G_i)$ is adjacent to some cover set, say $C_{i'}$, associated to a previous node $i'$ in the search tree, then $C_{i'}$ separated $u$ from at least one color of $\mathcal{F}_{i'}$ in $G_{i'}$, and $u$ was assigned one of those missing colors, say $blue$. No later iteration between $i$ and $i'$ can separate a connected component containing $G_i$ from the $blue$ vertices. That is, $C_{i'}$ is the unique cover set incident with $G_i$ and separating $V(G_i)$ from the $blue$ side of the frame. Since there are only three colors in a 3-frame, there can only be three such cover sets incident with $G_i$.
\end{proof}


\begin{thm} \label{BT-Alg}  For any connected planar graph $G$, the constant $KB$ output by BT-Alg satisfies $\lambda(G)/4\leq KB\leq \lambda(G)$. 
\end{thm}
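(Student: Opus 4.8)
The plan is to prove the two inequalities separately; the upper bound is immediate from the definition of $\lambda$, while the lower bound needs the search-tree structure together with the mutual-intersection property of vines. For $KB\le\lambda(G)$, I would observe that every cover produced by BT-Alg is bounded by the order of a net in a subgraph of $G$. At each node $i$ either $C_i=V(G_i)$ with $|G_i|<bestlow\le KB$, so $|C_i|<KB$, or $C_i$ is returned by Net-Alg and $|C_i|=\ord(\mathbf{N}(G_i,\mathcal{F}_i))$. In the latter case $G_i$ is a connected plane subgraph of $G$ carrying the $3$-frame $\mathcal{F}_i$, so $\mathbf{N}(G_i,\mathcal{F}_i)$ is a net in a subgraph of $G$, whence $|C_i|\le\lambda(G)$. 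Since $KB=bestlow=\max_i|C_i|$, this gives $KB\le\lambda(G)$.

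For $\lambda(G)/4\le KB$, fix a subgraph $H\subseteq G$ and a $3$-frame $\mathcal{G}$ with $\ord(\mathbf{N}(H,\mathcal{G}))=\lambda(G)$. Let $i^*$ be the deepest node of the search tree for which some vine of $\mathbf{N}(H,\mathcal{G})$ is contained in $V(G_{i^*})$; the root qualifies, so $i^*$ exists, and I fix a vine $X_0\subseteq V(G_{i^*})$. By Lemma~\ref{3_colors} at most three cover sets from earlier iterations, say $C_a,C_b,C_c$, are incident with $G_{i^*}$; write $B=C_a\cup C_b\cup C_c$. I would first record two structural facts. First, since the vertices of $G_{i^*}$ survive to node $i^*$, any vertex outside $V(G_{i^*})$ adjacent to $V(G_{i^*})$ was removed at an ancestor in a cover set incident with $G_{i^*}$, hence lies in $B$; thus a connected set disjoint from $B$ cannot cross from $V(G_{i^*})$ to its complement. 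Second, by Lemma~\ref{net_bramble} any two vines intersect, so every vine of $\mathbf{N}(H,\mathcal{G})$ meets $X_0\subseteq V(G_{i^*})$.

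The key step is to show that $C_{i^*}\cup B$ covers $\mathbf{N}(H,\mathcal{G})$. Suppose some vine $X$ avoided $C_{i^*}\cup B$. By the second fact $X$ meets $V(G_{i^*})$, and since $X$ is connected and disjoint from $B$, the first fact forbids it from leaving $G_{i^*}$, so $X\subseteq V(G_{i^*})$. As $X$ also misses $C_{i^*}$, it lies in a single component of $G_{i^*}-C_{i^*}$, i.e.\ in some child $G_{(i^*,q)}$, exhibiting a vine deeper in the tree than $i^*$ and contradicting the choice of $i^*$. Hence $C_{i^*}\cup B$ covers the net, so
\[
\lambda(G)=\ord(\mathbf{N}(H,\mathcal{G}))\le |C_{i^*}|+|C_a|+|C_b|+|C_c|.
\]
Each of these four cover sets has size at most $KB$, so by averaging one of them has size at least $\lambda(G)/4$, and therefore $KB\ge\lambda(G)/4$.

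The hard part will be the covering step, and within it the two structural facts. The delicate point is that the outer boundary of $G_{i^*}$ is entirely captured by the at-most-three incident covers of Lemma~\ref{3_colors}, so that a $B$-avoiding connected set cannot escape $G_{i^*}$; combined with mutual intersection of vines (Lemma~\ref{net_bramble}), which pins every vine to $V(G_{i^*})$, this is exactly what rules out vines hiding in unrelated branches of the search tree. Once both facts are in place, the pigeonhole over the four sets $C_{i^*},C_a,C_b,C_c$ delivers the factor $4$.
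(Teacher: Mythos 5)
Your proposal is correct and follows essentially the same route as the paper's proof: both select the deepest node whose subgraph still contains a vine, invoke Lemma~\ref{net_bramble} to pin every vine to that subgraph and Lemma~\ref{3_colors} to bound the incident covers by three, and then cover the maximum net by those at most three covers together with $C_{i^*}$, each of size at most $KB$. Your write-up merely makes explicit the boundary argument (that every neighbor of $V(G_{i^*})$ outside it lies in one of the incident covers) that the paper leaves implicit.
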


\begin{proof} Since BT-Alg finds a net of order $KB$ in a subgraph of $G$, we know that $KB \leq \lambda(G)$. Let $\mathbf{M}$ be a net of order $\lambda(G)$ in some subgraph of $G$. Lemma~\ref{net_bramble} implies that every vine in $\mathbf{M}$ covers $\mathbf{M}$, so each vine contains at least $\lambda(G)$ vertices. Consider a node $i$ in our search tree such that $V(G_i)$ contains a vine, say $Y \in \mathbf{M}$, and no child of $G_i$ contains a vine in $\mathbf{M}$. Such a node must exist in our search tree since the graph associated to each leaf has fewer than $\lambda(G)$ vertices.

By lemma~\ref{3_colors}, $G_i$ is incident with at most three net covers obtained in BT-Alg. Thus, every vine in $\mathbf{M}$ which is not contained in $G_i$ intersects at least one of these three net covers nontrivially. Furthermore, every vine in $\mathbf{M}$ that is contained in $G_i$ must intersect $C_i$. Now we can cover $\mathbf{M}$ with four covers obtained in the algorithm, at most three incident with $G_i$ plus $C_i$. By construction, each of these covers has size at most $KB$, so $\lambda(G) = \ord(\mathbf{M}) \leq 4KB$.
\end{proof}


Inspired by \cite{Grigoriev2011}, we use BT-Alg to construct a tree decomposition of a planar graph. This will give an upper bound on treewidth.

\begin{thm} \label{upper} For any connected planar graph $G$, the constant $KB$ output by BT-Alg satisfies $TW(G) \leq 4KB - 1$.
\end{thm}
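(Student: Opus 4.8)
The plan is to build an explicit tree decomposition $(T,\beta)$ of $G$ whose underlying tree $T$ is the rooted search tree produced by BT-Alg, and to read off the width bound directly from Lemma~\ref{3_colors}. The recursion partitions $V(G)$ into the cover sets $C_i$ (each internal node contributes its minimum net cover $C_i$, and each leaf contributes $C_i=V(G_i)$), so every vertex $v$ lies in exactly one cover; write $n(v)$ for that node. For a node $i$ I would set
$$\beta(i)=C_i\cup\{\,v : v\in C_{i'}\text{ for some proper ancestor }i'\text{ of }i,\ v\text{ is adjacent in }G\text{ to a vertex of }V(G_i)\,\}.$$
In words, the bag at $i$ is the cover removed at $i$ together with those earlier-removed vertices that still touch the current subgraph.

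To bound the width, note that the extra vertices of $\beta(i)$ all lie in cover sets $C_{i'}$ that are incident with $G_i$ in the sense of Lemma~\ref{3_colors}. That lemma guarantees at most three such covers, so the extra vertices lie in a union of at most three cover sets; since $KB=\max_i|C_i|$, each such cover has at most $KB$ vertices. As these extras are disjoint from $C_i$ (the covers partition $V(G)$), we get $|\beta(i)|\le|C_i|+3KB\le 4KB$, hence $\W(T,\beta)\le 4KB-1$.

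It remains to verify, via Lemma~\ref{alttw}, that $(T,\beta)$ is a tree decomposition. The structural fact I would lean on is monotonicity: if $i'$ is an ancestor of $i$, then $V(G_i)\subseteq V(G_{i'})$, because each child subgraph is a connected component of its parent minus a cover. Consequently, if $v\in C_{i'}$ is adjacent to $V(G_i)$ for some descendant $i$ of $i'$, then $v$ is adjacent to $V(G_{i''})$ for every $i''$ on the path from $i$ up to $i'$. Thus $\beta^{-1}(v)$ consists of $n(v)$ together with exactly those descendants of $n(v)$ whose subgraph $v$ touches, and this set is closed under taking parents down to $n(v)$; it is therefore a nonempty connected subtree, giving condition~(1). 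For condition~(2), take $uv\in E(G)$ and let $i^{\ast}$ be the deepest node with $u,v\in V(G_{i^{\ast}})$. Since $u$ and $v$ are adjacent, they cannot be separated into distinct children of $i^{\ast}$, so at least one of them lies in $C_{i^{\ast}}$; relabel so that $n(u)=i^{\ast}$. If also $v\in C_{i^{\ast}}$ then $u,v\in C_{i^{\ast}}\subseteq\beta(i^{\ast})$; otherwise $n(v)$ is a proper descendant of $i^{\ast}$, in which case $v\in C_{n(v)}\subseteq\beta(n(v))$ while $u\in C_{i^{\ast}}$ is adjacent to $v\in V(G_{n(v)})$, so $u\in\beta(n(v))$. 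Either way $\beta^{-1}(u)\cap\beta^{-1}(v)\neq\varnothing$.

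With both conditions of Lemma~\ref{alttw} checked, $(T,\beta)$ is a tree decomposition of $G$ of width at most $4KB-1$, so $TW(G)\le 4KB-1$. The step I expect to require the most care is connectivity (condition~(1)): it rests entirely on the monotonicity $V(G_i)\subseteq V(G_{i'})$ and on confirming that a removed vertex, once it stops touching the shrinking subgraph, never touches it again further down the tree — precisely the behavior that makes each $\beta^{-1}(v)$ a single subtree rather than several disconnected pieces.
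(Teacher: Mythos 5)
Your proposal is correct and follows essentially the same route as the paper: the bag $\beta(i)$ you define (the cover $C_i$ together with ancestor-cover vertices adjacent to $V(G_i)$) coincides with the paper's $\beta(i)=C_i\cup\{u\in V(G)\setminus V(G_i): N_G(u)\cap V(G_i)\neq\varnothing\}$, since any vertex outside $V(G_i)$ with a neighbor in $V(G_i)$ must lie in the cover of some ancestor node, and the width bound is obtained from Lemma~\ref{3_colors} exactly as in the paper. Your verification of the tree-decomposition axioms, in particular the connectivity of $\beta^{-1}(v)$ via monotonicity of the subgraphs $V(G_i)$, is a more detailed write-up of the argument the paper states tersely.
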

\begin{proof} We define a tree decomposition of $G$ as follows. Let $T$ be the tree constructed in the rooted tree search in BT-Alg.  For each node $i \in V(T)$, define 
	$$\beta(i) = \ C_i \cup \{u \in V(G)\backslash V(G_i) : N_G(u) \cap V(G_i) \neq \varnothing\}.$$
That is, each vertex set $\beta(i)$ contains the net cover, $C_i$, plus all vertices outside of $G_i$ which have a neighbor in $G_i$. We need to confirm that $(T, \beta)$ possesses the three properties of a tree decomposition. 

(1): In the algorithm, for each $u \in V(G)$, there is some $i \in V(T)$ such that $u \in C_i$, so $u \in \beta(i)$. 

(2): Suppose $uv \in E(G)$, $u \in C_i$ and $v \in C_{i'}$. If $i = i'$ or both $i \not= i'$ and $v \in V(G) \backslash V(G_i)$, then $u, v \in \beta(i)$. Otherwise $v \in V(G_i) \backslash C_i$, implying $G_{i'}$ is a subgraph of some connected component of $G_i - C_i$. By definition $u, v \in \beta(i')$.

(3): The third property is equivalent to showing that for any $u \in V(G)$, $\beta^{-1}(u)$ is connected. For any $u \in V(G)$, 
	$$\beta^{-1}(u) = \{i \in V(T) : u \in C_i \text{ or } (u \notin V(G_i)  \ \text{and} \ \exists v \in N_G(u) \cap V(G_i) ) \}.$$
If $u \in C_j$, then $T[\beta^{-1}(u)]$ will be a subtree of $T$ rooted at $j$.

We have shown that $(T, \beta)$ is a tree decomposition of $G$. For any $i \in V(T)$, lemma~\ref{3_colors} implies $\beta(i)$ intersects at most four net covers defined in the algorithm. Therefore, $|\beta(i)| \leq 4KB$. That is, the width of $(T, \beta)$ is at most $4KB - 1$.
\end{proof}

Combining Theorems~\ref{BT-Alg} and \ref{upper}, we get the following result on lower and upper bounds on the treewidth of a planar graph:

\begin{thm} \label{lower-and-upper} Let $G$ be a planar graph. Then BT-Alg computes $KB$ in $O(|G|^3)$ time, and 
$$\frac{\lambda(G)}{4} \leq KB \leq BN (G) \leq 4KB \leq 4\lambda(G).$$

\end{thm}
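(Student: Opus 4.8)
The plan is to obtain the displayed chain by stitching together the three facts already proved—Theorem~\ref{BT-Alg}, Theorem~\ref{upper}, and the Seymour--Thomas duality of Theorem~\ref{duality}—and to establish the $O(|G|^3)$ running time by a separate counting argument on the rooted search tree that BT-Alg builds. I would verify the inequalities from left to right. The two outer inequalities come for free from Theorem~\ref{BT-Alg}, which asserts $\lambda(G)/4 \le KB \le \lambda(G)$: the left end is literally $\lambda(G)/4 \le KB$, and multiplying the right end by $4$ gives $4KB \le 4\lambda(G)$.

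For the inner inequality $KB \le BN(G)$, I would use that BT-Alg outputs $KB = bestlow$, which (as in the proof of Theorem~\ref{BT-Alg}) is the order of a genuine net $\mathbf{M}$ living in some subgraph $H \subseteq G$. The point to check is that $\mathbf{M}$ is also a bramble in $G$: each vine is connected in $H$ and hence in $G$, any two vines touch in $H$ and hence in $G$, and the minimum cover size is unchanged when the vines are viewed inside $G$ because every vine is a subset of $V(H)$. Thus $\mathbf{M}$ is a bramble in $G$ of order $KB$, forcing $KB \le BN(G)$ (equivalently, $\lambda(G) \le BN(G)$). For the remaining inner inequality $BN(G) \le 4KB$, I would combine Theorem~\ref{upper}, namely $TW(G) \le 4KB - 1$, with the duality $TW(G) = BN(G) - 1$ of Theorem~\ref{duality}; substituting gives $BN(G) - 1 \le 4KB - 1$, that is, $BN(G) \le 4KB$.

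For the running time, I would bound the number of nodes of the search tree $T$. The crux is that the cover sets $\{C_i\}_{i \in V(T)}$ form a partition of $V(G)$ into nonempty blocks: when BT-Alg processes node $i$ it deletes $C_i$ and recurses on the components of $G_i - C_i$, so $C_i$ is disjoint from every descendant and sibling cover; every vertex eventually lands in some $C_i$ (at the latest at a leaf, where $C_i = V(G_i)$); and each $C_i$ is nonempty, since a vine and hence its cover meets all three sides and uses at least one vertex, while leaf covers equal nonempty component vertex sets. Therefore $|V(T)| \le |G|$. As each node invokes Net-Alg once at cost $O(|G|^2)$, and the per-node bookkeeping (frame assignment, extraction of components) together with the single initial Hopcroft--Tarjan embedding is $O(|G|)$ per node, the total is $O(|G|^3)$.

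The inequalities are essentially immediate once the earlier theorems are invoked; the only place requiring genuinely new work is the running-time bound, and there the main obstacle is the partition argument that caps the node count at $|G|$. The subtle point to get right is that covers are never empty—at internal nodes, at leaves, and for degenerate frames alike—because only then does pairwise disjointness actually force a linear number of search-tree nodes.
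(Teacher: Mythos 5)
Your proposal is correct and follows essentially the same route as the paper: the inequality chain is assembled from Theorem~\ref{BT-Alg}, Theorem~\ref{upper}, and the Seymour--Thomas duality, and the $O(|G|^3)$ bound comes from observing that the cover sets $C_i$ are disjoint and nonempty, so the search tree has at most $|G|$ nodes, each costing $O(|G|^2)$ for Net-Alg. You spell out two points the paper leaves implicit---that a net of order $KB$ in a subgraph is a bramble of the same order in $G$ itself, and that leaf covers are nonempty---but these are filling in the same argument, not a different one.
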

\begin{proof}
	We run BT-Alg on each connected component of $G$. Hopcroft and Tarjan's algorithm generates a planar embedding in $O(|G|)$ time. In the iteration, Net-Alg runs in $O(|G|^2)$ time. Moreover, the nodes in the search tree are in bijection with disjoint, nonempty cut-sets from $G$, so this iteration runs at most $|G|$ times. Therefore, the rooted tree search completes in $O(|G|^3)$ time. The upper and lower bounds on bramble number follow directly from theorem~\ref{upper} and theorem~\ref{BT-Alg}, respectively. 
\end{proof}

\section{Correcting a lower bound with square grids}

In this section, we explain the difficulties with \cite{Bodlaender2008}, in particular, the proof of Theorem~3.2(\cite{Bodlaender2008}) and the algorithm $\mathcal A_2$(\cite{Bodlaender2008}). Algorithm $\mathcal A_2$(\cite{Bodlaender2008}) defines four sides to a plane graph and it determines a tree of north-south or east-west cuts, and it inspired us to write BT-Alg. Theorem~3.2(\cite{Bodlaender2008}) finds lower bound $LB_2$ using Algorithm $\mathcal A_2$(\cite{Bodlaender2008}). It inspired us to prove Theorem~5.5. 
In the examples below, we follow their notation, using $\mathcal N$, $\mathcal E$, $\mathcal S$, $\mathcal W$ as the dummy vertices adjacent to the paths $North$, $East$, $South$, $West$.

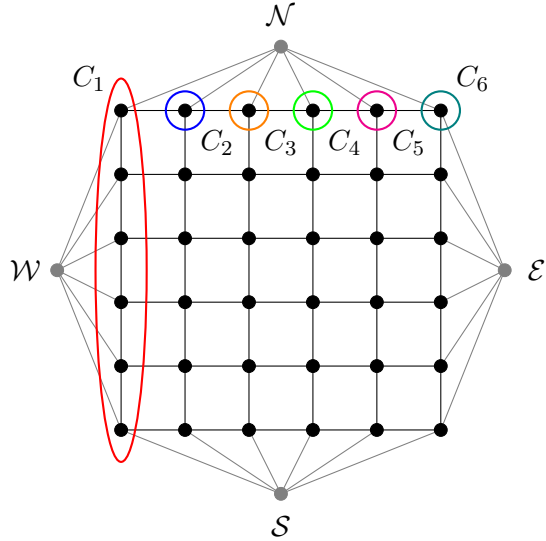
\begin{figure}[h] \label{Problem-2}
\begin{center}
\begin{tikzpicture}[scale=.85]

	\draw [gray] (-1,2.5) -- (0,0);
	\draw [gray] (-1,2.5) -- (0,1);
	\draw [gray] (-1,2.5) -- (0,2);
	\draw [gray] (-1,2.5) -- (0,3);
	\draw [gray] (-1,2.5) -- (0,4);
	\draw [gray] (-1,2.5) -- (0,5);
	
	\draw [gray] (6,2.5) -- (5,0);
	\draw [gray] (6,2.5) -- (5,1);
	\draw [gray] (6,2.5) -- (5,2);
	\draw [gray] (6,2.5) -- (5,3);
	\draw [gray] (6,2.5) -- (5,4);
	\draw [gray] (6,2.5) -- (5,5);
	
	\draw [gray] (2.5,-1) -- (0,0);
	\draw [gray] (2.5,-1) -- (1,0);
	\draw [gray] (2.5,-1) -- (2,0);
	\draw [gray] (2.5,-1) -- (3,0);
	\draw [gray] (2.5,-1) -- (4,0);
	\draw [gray] (2.5,-1) -- (5,0);
	
	\draw [gray] (2.5,6) -- (0,5);
	\draw [gray] (2.5,6) -- (1,5);
	\draw [gray] (2.5,6) -- (2,5);
	\draw [gray] (2.5,6) -- (3,5);
	\draw [gray] (2.5,6) -- (4,5);
	\draw [gray] (2.5,6) -- (5,5);
	
	\draw (0,0) -- (0,5) -- (5,5) -- (5,0) -- cycle;
	\draw (0,1) -- (5,1);
	\draw (0,2) -- (5,2);
	\draw (0,3) -- (5,3);
	\draw (0,4) -- (5,4);
	\draw (1,0) -- (1,5);
	\draw (2,0) -- (2,5);
	\draw (3,0) -- (3,5);
	\draw (4,0) -- (4,5);
	
	\draw [fill] (0, 0) circle [radius=0.1];
	\draw [fill] (1, 0) circle [radius=0.1];
	\draw [fill] (2, 0) circle [radius=0.1];
	\draw [fill] (3, 0) circle [radius=0.1];
	\draw [fill] (4, 0) circle [radius=0.1];
	\draw [fill] (5, 0) circle [radius=0.1];
	\draw [fill] (0, 1) circle [radius=0.1];
	\draw [fill] (1, 1) circle [radius=0.1];
	\draw [fill] (2, 1) circle [radius=0.1];
	\draw [fill] (3, 1) circle [radius=0.1];
	\draw [fill] (4, 1) circle [radius=0.1];
	\draw [fill] (5, 1) circle [radius=0.1];
	\draw [fill] (0, 2) circle [radius=0.1];
	\draw [fill] (1, 2) circle [radius=0.1];
	\draw [fill] (2, 2) circle [radius=0.1];
	\draw [fill] (3, 2) circle [radius=0.1];
	\draw [fill] (4, 2) circle [radius=0.1];
	\draw [fill] (5, 2) circle [radius=0.1];
	\draw [fill] (0, 3) circle [radius=0.1];
	\draw [fill] (1, 3) circle [radius=0.1];
	\draw [fill] (2, 3) circle [radius=0.1];
	\draw [fill] (3, 3) circle [radius=0.1];
	\draw [fill] (4, 3) circle [radius=0.1];
	\draw [fill] (5, 3) circle [radius=0.1];
	\draw [fill] (0, 4) circle [radius=0.1];
	\draw [fill] (1, 4) circle [radius=0.1];
	\draw [fill] (2, 4) circle [radius=0.1];
	\draw [fill] (3, 4) circle [radius=0.1];
	\draw [fill] (4, 4) circle [radius=0.1];
	\draw [fill] (5, 4) circle [radius=0.1];
	\draw [fill] (0, 5) circle [radius=0.1];
	\draw [fill] (1, 5) circle [radius=0.1];
	\draw [fill] (2, 5) circle [radius=0.1];
	\draw [fill] (3, 5) circle [radius=0.1];
	\draw [fill] (4, 5) circle [radius=0.1];
	\draw [fill] (5, 5) circle [radius=0.1];
	
	\draw [thick, red] (0,2.5) ellipse (0.4 and 3);
	\draw [thick, blue] (1, 5) circle [radius=0.3];
	\draw [thick, orange] (2, 5) circle [radius=0.3];
	\draw [thick, green] (3, 5) circle [radius=0.3];
	\draw [thick, magenta] (4, 5) circle [radius=0.3];
	\draw [thick, teal] (5, 5) circle [radius=0.3];
	
	\node at (-0.5,5.5) {$C_1$};
	\node at (1.5,4.5) {$C_2$};
	\node at (2.5,4.5) {$C_3$};
	\node at (3.5,4.5) {$C_4$};
	\node at (4.5,4.5) {$C_5$};
	\node at (5.5,5.5) {$C_6$};
	
	\draw [fill, gray] (-1, 2.5) circle [radius=0.1];
	\draw [fill, gray] (2.5,-1) circle [radius=0.1];
	\draw [fill, gray] (6, 2.5) circle [radius=0.1];
	\draw [fill, gray] (2.5,6) circle [radius=0.1];
	
	\node at (2.5,6.5) {$\mathcal{N}$};
	\node at (2.5,-1.5) {$\mathcal{S}$};
	\node at (-1.5,2.5) {$\mathcal{W}$};
	\node at (6.5,2.5) {$\mathcal{E}$};
\end{tikzpicture}
\end{center}
\caption{After six iterations of $\mathcal{A}_2$(\cite{Bodlaender2008}), the only component subgraph is incident with all six cuts. }
\end{figure}

The first step of $\mathcal A_2$(\cite{Bodlaender2008}) assigns roughly equal numbers of vertices to each of $North$, $East$, $South$ and $West$. We note that throughout the algorithmic procedure, it is impossible to maintain four paths of roughly the same length. For instance, if a cut dramatically increases the number of peripheral vertices, then it is possible that one side will contain much less than one-quarter of the vertices in the new periphery.  Thus, we cannot assume that the paths will be of approximately equal length or that this constitutes an assignment of vertices to the new periphery. 

The problem with $\mathcal A_2$(\cite{Bodlaender2008}) is that it is not specific in its assignment of new peripheral vertices to $North$, $East$, $South$ or $West$ at each iteration. When making the periphery assignments, there are choices that will not yield the desired result in the proof of Theorem~3.2(\cite{Bodlaender2008}). Figure~\ref{Problem-2} describes such a situation. Our example is a $6\times 6 $ grid with $\mathcal N$, $\mathcal E$, $\mathcal S$, $\mathcal W$  each adjacent to a side of the grid. We choose $C_1$ to be the leftmost column that separates $\mathcal E $ and $\mathcal W$. The rule we use to assign vertices to the periphery is that we extend $South$ to meet $North$, and $West$ is atomic. Then $C_i$,  $2\leq i\leq 6$, is the singleton vertex in $West$. This yields a connected component that is adjacent to six separations, where the expected number was four.

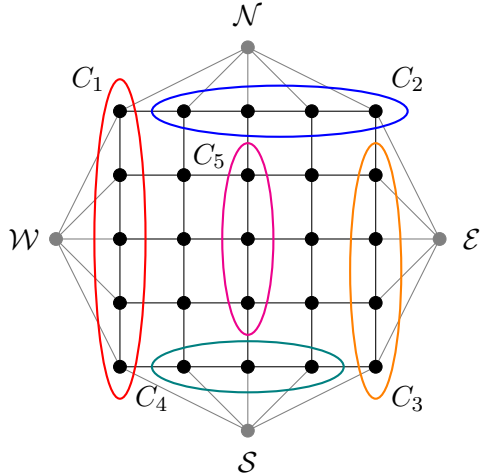
\begin{figure}[h]
\begin{center}
\begin{tikzpicture}[scale=.85]
	\draw [fill, gray] (-1, 2) circle [radius=0.1];
	\draw [fill, gray] (2,-1) circle [radius=0.1];
	\draw [fill, gray] (5, 2) circle [radius=0.1];
	\draw [fill, gray] (2,5) circle [radius=0.1];
	
	\draw [gray] (-1,2) -- (0,0);
	\draw [gray] (-1,2) -- (0,1);
	\draw [gray] (-1,2) -- (0,2);
	\draw [gray] (-1,2) -- (0,3);
	\draw [gray] (-1,2) -- (0,4);
	
	\draw [gray] (5,2) -- (4,0);
	\draw [gray] (5,2) -- (4,1);
	\draw [gray] (5,2) -- (4,2);
	\draw [gray] (5,2) -- (4,3);
	\draw [gray] (5,2) -- (4,4);
	
	\draw [gray] (2,-1) -- (0,0);
	\draw [gray] (2,-1) -- (1,0);
	\draw [gray] (2,-1) -- (2,0);
	\draw [gray] (2,-1) -- (3,0);
	\draw [gray] (2,-1) -- (4,0);
	
	\draw [gray] (2,5) -- (0,4);
	\draw [gray] (2,5) -- (1,4);
	\draw [gray] (2,5) -- (2,4);
	\draw [gray] (2,5) -- (3,4);
	\draw [gray] (2,5) -- (4,4);
	
	\node at (2,5.5) {$\mathcal{N}$};
	\node at (2,-1.5) {$\mathcal{S}$};
	\node at (-1.5,2) {$\mathcal{W}$};
	\node at (5.5,2) {$\mathcal{E}$};

	\draw (0,0) -- (0,4) -- (4,4) -- (4,0) -- cycle;
	\draw (0,1) -- (4,1);
	\draw (0,2) -- (4,2);
	\draw (0,3) -- (4,3);
	\draw (1,0) -- (1,4);
	\draw (2,0) -- (2,4);
	\draw (3,0) -- (3,4);
	
	\draw [fill] (0, 0) circle [radius=0.1];
	\draw [fill] (1, 0) circle [radius=0.1];
	\draw [fill] (2, 0) circle [radius=0.1];
	\draw [fill] (3, 0) circle [radius=0.1];
	\draw [fill] (4, 0) circle [radius=0.1];
	\draw [fill] (0, 1) circle [radius=0.1];
	\draw [fill] (1, 1) circle [radius=0.1];
	\draw [fill] (2, 1) circle [radius=0.1];
	\draw [fill] (3, 1) circle [radius=0.1];
	\draw [fill] (4, 1) circle [radius=0.1];
	\draw [fill] (0, 2) circle [radius=0.1];
	\draw [fill] (1, 2) circle [radius=0.1];
	\draw [fill] (2, 2) circle [radius=0.1];
	\draw [fill] (3, 2) circle [radius=0.1];
	\draw [fill] (4, 2) circle [radius=0.1];
	\draw [fill] (0, 3) circle [radius=0.1];
	\draw [fill] (1, 3) circle [radius=0.1];
	\draw [fill] (2, 3) circle [radius=0.1];
	\draw [fill] (3, 3) circle [radius=0.1];
	\draw [fill] (4, 3) circle [radius=0.1];
	\draw [fill] (0, 4) circle [radius=0.1];
	\draw [fill] (1, 4) circle [radius=0.1];
	\draw [fill] (2, 4) circle [radius=0.1];
	\draw [fill] (3, 4) circle [radius=0.1];
	\draw [fill] (4, 4) circle [radius=0.1];
	
	\draw [thick, red] (0,2) ellipse (0.4 and 2.5);
	\draw [thick, blue] (2.5,4) ellipse (2 and 0.4);
	\draw [thick, orange] (4,1.5) ellipse (0.4 and 2);
	\draw [thick, teal] (2,0) ellipse (1.5 and 0.4);
	\draw [thick, magenta] (2,2) ellipse (0.4 and 1.5);
	
	\node at (-0.5,4.5) {$C_1$};
	\node at (4.5,4.5) {$C_2$};
	\node at (4.5,-0.5) {$C_3$};
	\node at (0.5,-0.5) {$C_4$};
	\node at (1.4,3.3) {$C_5$};
	
\end{tikzpicture}
\end{center}
\caption{After four iterations of $\mathcal{A}_2$(\cite{Bodlaender2008}), there is only one component subgraph. The two subgraphs associated to the child nodes are defined by five cuts in the algorithm.} \label{Prob3}
\end{figure} 

Next we consider $\mathcal A_2$(\cite{Bodlaender2008}) assuming that the assignment of vertices to the periphery occurs as in BT-Alg. We will show that there is an error in the proof of Theorem~3.2(\cite{Bodlaender2008}) and that the proof shows only $LB_2\geq s(G)/5$, where $s(G)$ is the size of largest square grid minor of $G$. In the proof the authors construct a rooted-search-tree where each node $i$ is associated to a subgraph $G_i$ of $G$. 
Each $G_i$ is determined by at most two $\mathcal N-\mathcal S$ vertex cuts and at most two $\mathcal E-\mathcal W$ vertex cuts, hence at most four vertex cuts. The children of $G_i$ are the connected components of $G_i - C_i$, where $C_i$ is an additional vertex cut, either $\mathcal N-\mathcal S$ or $\mathcal E-\mathcal W$. The authors assume that at most four vertex cuts are enough to separate all child nodes of $G_i$. They use this claim in the very last sentence of the proof of Theorem~3.2(\cite{Bodlaender2008}), to show some one of these cuts must have size at least $s(G)/4$.

However, it may take five vertex cuts to determine, simultaneously, all subgraphs associated to child nodes of $i$. Although each of these children are determined by at most four cuts, to simultaneously separate all of them, we must include the four that separate $G_i$ and the cut that divides $G_i$ into its children. 
Figure~\ref{Prob3} is an example of when five cuts are needed. If we implement algorithm $\mathcal{A}_2$(\cite{Bodlaender2008})  on the $5 \times 5$ grid, then $\mathcal{A}_2$(\cite{Bodlaender2008}) could produce the sequence of five cut sets pictured, $C_1$, $C_2$, $C_3$, $C_4$ and $C_5$. After four iterations of the algorithm, there is only one component subgraph associated to a single node in the rooted-search-tree. Then removing $C_5$ produces two component subgraphs, each of which is adjacent to only four cuts in the algorithm. However, all five cuts need to be removed from the original grid in order to obtain both component subgraphs.

Thus, the bound proved in Theorem~3.2(\cite{Bodlaender2008}) shows $LB_2\geq s(G)/5$, not $s(G)/4$. 

\section{Concluding remarks}

We have shown that nets are a natural generalization of high order brambles found in square grids. Moreover, using only three sides in their definition makes nets a more natural candidate for a high order obstruction to treewidth than the four-sided square grid minors. In practice, because a square grid minor has more structure than a net, it is harder to find. Thus we expect our algorithm to perform better than $\mathcal A_2$(\cite{Bodlaender2008}). It would be interesting to experimentally test BT-Alg for running time and efficiency on small graphs to see if this holds. 

In the experimental results in \cite{Bodlaender2008} $LB_2$ is usually at least half of $BN(G)$, so typically $LB_2$ is much larger than the theoretical lower bound. We expect that in practice, $KB$ will be much greater than $\lambda(G)/4$. We expect only very specific constructions to achieve this theoretical lower bound.

\bibliographystyle{plain}

\bibliography{netbib}

\end{document}